\newtheorem{theorem}{Theorem}
\newtheorem{definition}{Definition}
\newtheorem{example}{Example}
\newtheorem{remark}{Remark}
\newtheorem{proposition}{Proposition}
\newtheorem{algorithm}{Algorithm}
\title{Geometry of rational helices and its applications}
\author{Fatma \c{S}eng\"{u}ler-\c{C}ift\c{c}i \\[2ex]
Johann Bernoulli Institute for Mathematics and Computer Science\\ University of Groningen\\ PO Box 407\\ 
9700 AK Groningen, The Netherlands \\
F.A.Senguler-Ciftci@rug.nl}
\begin{document}
\maketitle

\abstract{
The present paper attempts to show an alternative approach 
with regards to rational
Pythagorean-hodograph (PH) curves and especially more natural approach for rational PH helices (i.e. rational helices).
It exploits geometric features
of rational helices to obtain a simpler construction of these curves
and apply this to related subjects. 
One of these applications is \emph{Geometric $C^1$ Hermite interpolation}
(i.e. interpolation of end points
with associated unit tangents) by rational helices. Furthermore, we investigate the existence of 
rational rotation minimizing frames (RRMFs) on rational helices. A rational approximation
procedure to rotation minimizing frames (RMFs) is suggested.
Subsequently, we deploy the approximate frame for modeling a rational sweep surface.
The resulting algorithms are illustrated by several examples.  
\vspace*{1cm}

\noindent
MSC 2010 numbers: 53A04, 65D05, 65D17, 65D18, 68U07.

\noindent 
\textbf{Keywords.} Pythagorean-hodograph curves, rational helices, rotation minimizing frames, geometric $C^1$ Hermite interpolation, approximation.
\noindent 

\section{Introduction}
\label{sec:Introduction}
A parametric \emph{rational} curve $\mathbf{r}(t)$ is called a \emph{Pythagorean-hodograph} (PH) curve, if its
parametric speed $\sigma(t) = |\mathbf{r}'(t)|$ is a rational polynomial of the time parameter \cite{Faroukietal98, Farouki08}. 
More explicitly, these curves are defined as \emph{rational}  curves $\mathbf{r}(t)=(x(t),y(t),z(t))$ fulfilling the distinguishing condition 
\begin{equation*}
\mathbf{r}'(t) \cdot \mathbf{r}'(t) = x{'}^2(t) + y{'}^2(t) + z{'}^2(t) = \sigma^2(t),
 \end{equation*}
  for some rational polynomial $\sigma(t)$.
The theory of rational PH curves is an extensively studied
research topic in Computer Aided Geometric Design (CAGD), because of their distinguishing features from \emph{ordinary} polynomial curves. 
Their parametric speed, curvatures and offsets depend 
\emph{rationally} on the curve parameter. Furthermore, only the PH curves admit rational unit tangents or more 
generally rational adapted frames \cite{WagnerRavani97}, and 
they have \emph{exact} rotation minimizing frames (RMFs) \cite{Farouki02}.
 
The theory of polynomial PH curves is quite well-established, but 
rational PH curves have not been investigated enough except the planar rational PH curves case \cite{Farouki08, Fiorot94, Pottmann95a}. 
The extension from polynomial to rational PH curves is non-trivial, because a rational hodograph does not always yield a rational curve upon 
integration. A comparison of polynomial and rational planar PH curves can be found in \cite{Farouki08} and \cite{Pottmann95a} - see also {\v S}ir et al \cite{Sir2010}. 
Farouki and  {\v S}ir \cite{FaroukiSir11} recently have given a formulation for rational PH space curves
by using the Euclidean geometry of space curves. They identified a space curve as the edge of
regression of its tangent developable surface. This consideration led them to express a
rational PH curve as follows. 
\begin{proposition}
Let $\mathbf{v}(t)$ be any rational vector field and
$\mathbf{r}(t)$ be a rational space curve. Then $\mathbf{v}(t)$
is tangent to $\mathbf{r}(t)$ if and only if
there exists a rational function $f(t)$ such that 
\begin{equation*}
\mathbf{r}(t)= \,\frac{f(t)\,\mathbf{w}'(t)\times \mathbf{w}''(t)+f'(t)\,\mathbf{w}''(t) \times \mathbf{w}(t)+f''(t)\,\mathbf{w}(t) \times \mathbf{w}'(t)}
{\mathbf{w}(t)\cdot (\mathbf{w}'(t) \times \mathbf{w}''(t))},
\end{equation*}
where $\mathbf{w}(t)=\mathbf{v}(t) \times \mathbf{v}'(t)$.
\end{proposition}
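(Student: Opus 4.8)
The plan is to read $\mathbf{w}(t)=\mathbf{v}(t)\times\mathbf{v}'(t)$ as a scalar multiple of the binormal of $\mathbf{r}$ and to recover $\mathbf{r}$ itself as the edge of regression of its tangent developable, i.e.\ as the envelope of the family of osculating planes $\mathbf{w}(t)\cdot\mathbf{x}=\mathbf{w}(t)\cdot\mathbf{r}(t)$. I would first record the two elementary identities that drive everything: since $\mathbf{w}=\mathbf{v}\times\mathbf{v}'$, both $\mathbf{w}\cdot\mathbf{v}=0$ and $\mathbf{w}\cdot\mathbf{v}'=0$, and differentiating the first while substituting the second gives $\mathbf{w}'\cdot\mathbf{v}=0$ as well. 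Throughout one keeps in force that $D:=\mathbf{w}\cdot(\mathbf{w}'\times\mathbf{w}'')$ is not identically zero (it is the denominator of the claimed expression), which in particular forces $\mathbf{w}$ and $\mathbf{w}'$ to be linearly independent.

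For the direction ``$\mathbf{v}$ tangent $\Rightarrow$ formula'', I would write $\mathbf{r}'=\lambda\,\mathbf{v}$ with $\lambda$ a rational function, set $f:=\mathbf{w}\cdot\mathbf{r}$ (rational), and differentiate: using $\mathbf{w}\cdot\mathbf{v}=0$ gives $f'=\mathbf{w}'\cdot\mathbf{r}$, and differentiating once more and using $\mathbf{w}'\cdot\mathbf{v}=0$ gives $f''=\mathbf{w}''\cdot\mathbf{r}$. Hence $\mathbf{r}(t)$ solves the linear system $\mathbf{w}\cdot\mathbf{x}=f$, $\mathbf{w}'\cdot\mathbf{x}=f'$, $\mathbf{w}''\cdot\mathbf{x}=f''$, whose coefficient determinant is $D\neq0$, so $\mathbf{r}$ is uniquely determined. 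Solving by Cramer's rule in its reciprocal-basis form,
\[
\mathbf{x}=\frac{b_1\,(\mathbf{a}_2\times\mathbf{a}_3)+b_2\,(\mathbf{a}_3\times\mathbf{a}_1)+b_3\,(\mathbf{a}_1\times\mathbf{a}_2)}{\mathbf{a}_1\cdot(\mathbf{a}_2\times\mathbf{a}_3)},
\]
with $(\mathbf{a}_1,\mathbf{a}_2,\mathbf{a}_3)=(\mathbf{w},\mathbf{w}',\mathbf{w}'')$ and $(b_1,b_2,b_3)=(f,f',f'')$, reproduces precisely the stated formula.

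For the converse, suppose $\mathbf{r}$ is given by the stated formula for some rational $f$. Dotting that formula with $\mathbf{w}$, then $\mathbf{w}'$, then $\mathbf{w}''$, and discarding the scalar triple products having a repeated argument, shows $\mathbf{r}$ satisfies the same three equations $\mathbf{w}\cdot\mathbf{r}=f$, $\mathbf{w}'\cdot\mathbf{r}=f'$, $\mathbf{w}''\cdot\mathbf{r}=f''$. Differentiating $\mathbf{w}\cdot\mathbf{r}=f$ and subtracting $\mathbf{w}'\cdot\mathbf{r}=f'$ gives $\mathbf{w}\cdot\mathbf{r}'=0$; differentiating $\mathbf{w}'\cdot\mathbf{r}=f'$ and subtracting $\mathbf{w}''\cdot\mathbf{r}=f''$ gives $\mathbf{w}'\cdot\mathbf{r}'=0$. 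Thus $\mathbf{r}'$ is orthogonal to both $\mathbf{w}$ and $\mathbf{w}'$, hence lies on the line spanned by $\mathbf{w}\times\mathbf{w}'$; but $\mathbf{v}$ lies on that same line since $\mathbf{w}\cdot\mathbf{v}=\mathbf{w}'\cdot\mathbf{v}=0$. Therefore $\mathbf{r}'$ and $\mathbf{v}$ are parallel, i.e.\ $\mathbf{v}$ is tangent to $\mathbf{r}$.

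The only genuine obstacle I anticipate is the bookkeeping around degeneracy rather than any deep step: one must keep $D\neq0$ so that the three osculating planes are in general position and the perpendicular line spanned by $\mathbf{w}\times\mathbf{w}'$ is genuinely one-dimensional (this excludes planar or inflectional pieces), and in the forward direction one should confirm that two parallel rational vector fields $\mathbf{r}'$ and $\mathbf{v}$ really do have a rational ratio $\lambda$, which is clear away from their common zeros. Everything else reduces to differentiation and the scalar-triple-product identity.
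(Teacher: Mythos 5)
Your proof is correct, and it is essentially the edge-of-regression / envelope-of-osculating-planes argument that the paper attributes to Farouki and \v{S}ir for this proposition: your choice $f=\mathbf{w}\cdot\mathbf{r}$ is exactly the paper's relation $f(t)=\det(\mathbf{r}(t),\mathbf{v}(t),\mathbf{v}'(t))$, and solving the system $\mathbf{w}\cdot\mathbf{x}=f$, $\mathbf{w}'\cdot\mathbf{x}=f'$, $\mathbf{w}''\cdot\mathbf{x}=f''$ via the reciprocal basis reproduces the displayed formula, while your converse correctly recovers $\mathbf{r}'\parallel\mathbf{w}\times\mathbf{w}'\parallel\mathbf{v}$. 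The paper itself only quotes this result without proof (its own contribution is the alternative construction of Proposition 2 in the basis $(\mathbf{v},\mathbf{v}',\mathbf{v}\times\mathbf{v}')$), so your argument supplies a complete proof along the original lines, with the nondegeneracy condition $\mathbf{w}\cdot(\mathbf{w}'\times\mathbf{w}'')\not\equiv 0$ appropriately flagged.
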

\noindent
The interpretation of Proposition 1 is a characterization of rational curve with rational tangent field. Rational PH curves $\mathbf{r}(t)$ 
are obtained from Proposition 1 when the tangent field has rational norm (or is a rational unit vector field for example).
 It means the unit tangent $\mathbf{t}(t)$ of a PH curve $\mathbf{r}(t)$ is has a rational dependence on the parameter $t.$

See that in the proposition above it has the relation
\begin{equation}\label{eq:equation_function_f}
f(t)= \det(\mathbf{r}(t),\mathbf{v}(t),\mathbf{v}'(t)).
\end{equation}
They considered Hermite interpolation problem in detail by using rational PH curves.
Additionally, many relevant directions are pointed out in
\cite{FaroukiSir11} which stimulated our work.

Among others \emph{rational helices} form a special class
of rational PH curves because of their rich geometric aspects, such as having rational
Frenet-Serret frames.
These curves are curves with constant curvature and torsion ratios: i.e., $\frac{\tau}{\kappa}=\mbox{constant}$, where
\begin{equation}\label{eq:torsion}
\kappa=\frac{\mathbf{r}'(t) \times \mathbf{r}''(t)}{|\mathbf{r}'(t)|^3} \quad \text{and} \quad \tau=\frac{(\mathbf{r}'(t)\times\mathbf{r}''(t))\cdot \mathbf{r}'''(t)}{|\mathbf{r}'(t) \times\mathbf{r}''(t)|^2}.
\end{equation}
If a rational curve $\mathbf{r}(t)$ is a helix, then its hodograph $\mathbf{r}'(t)$
has to be Pythagorean. 
(Therefore, we shortened the name `rational PH helices' with `rational helices' along the paper.)
The converse of this statement is not generally true: 
Although all PH cubics are helices, there exist PH quintics which are not helical, for instance \cite{Farouki08}, \cite{BeltranMonterde07}, \cite{Monterde09}. 
Due to their importance in applications rational helices deserve special attention, which is the subject of this paper.

In the present paper, we firstly give a construction of a rational PH curve
in a slightly different way which requires less differential geometric background than Farouki and {\v S}ir \cite{FaroukiSir11}.
Our construction is similar to what is done in line geometry to
obtain the striction curve of a tangent developable surface \cite{PottmannWallner01}.
Moreover, the new approach is more adapted to rational helices. 
One of the distinctive features of these curves
is that their unit tangent traces a small circle on the sphere  \cite{FaroukiSir11}, \cite{Han11}, \cite{doCarmo76}.
This basis is constructed by using the property that a helix makes a constant angle with
a fixed direction \cite{doCarmo76}.  If the \emph{unit tangent} of the rational helix is
$\mathbf{t}(t)$ and a \emph{unit vector} which makes constant angle with $\mathbf{t}(t)$
is $\mathbf{u}$, then a rational orthonormal frame is constructed by
a Gram-Schmidt process. Furthermore, it is shown that if the unit tangent and one of the rational function are given,
 we can construct the rational helix by a simple algorithm.

An \emph{adapted frame} on a space curve  $\mathbf{r}(t)$ is an orthonormal moving frame 
$(\mathbf{f}_1(t),\mathbf{f}_2(t),$ $\mathbf{f}_3(t))$ 
such that, $\mathbf{f}_1(t)=\mathbf{t}(t)$ is the unit tangent $\mathbf{r}'(t)/|\mathbf{r}'(t)|,$ 
and the other two vectors span the normal plane. 
A \emph{rotation minimizing frame} (RMF)
 $(\mathbf{t}(t),\mathbf{f}_2(t),\mathbf{f}_3(t))$ of a curve in space
consists of the tangent $\mathbf{t}(t)$ and two
normal vectors $\mathbf{f}_2(t)$ and $\mathbf{f}_3(t)$ which rotate as little as possible around 
$\mathbf{t}(t)$ that makes them distinguished among adapted frames. RMF has been studied by several authors
\cite{Farouki02, BartonJuttlerWang10, fatma-gert-2013, fatma-gert-2013eurocg,
  pottmann1997principal, pottmann1998contributions, Faroukietal12a, FaroukiSakkalis10}. These frames are used in animation, robotics applications,
the construction of swept surfaces \cite{Klok86} where 
the axis of a tool should remain tangential to a given spatial path while minimizing changes 
of orientation about this axis. 
From the point of view of applications rational helices are
 important to posses an associated rational approximation to RMFs as we explain below.
RMFs on rational helices are not rational in general, therefore our approximation
is hoped to be of great relevance.

We develop a rational approximation to RMFs on rational helices with a high accuracy method. For doing this,
we intend to extend our earlier work \cite{fatma-gert-2013eurocg} to rational helices. Besides that we invoke rational
 approximation methods which appear in the literature, e.g. \cite{FaroukiHan03, Maurer99}.
An immediate application of this rational approximation is rational approximation to
sweep surfaces. A \emph{sweep surface} is defined by sweeping of a plane curve $c(s)=(c_1(s),c_2(s))^T,$  (\emph{profile curve}) along a given $\mathbf{r}(t)$
(\emph{spine curve}), i.e,
\begin{equation}
\textbf{S}(s,t)=\,\mathbf{r}(t)+c_1(s)\,\mathbf{f}_2(t)+ c_2(s)\,\mathbf{f}_3(t),
\end{equation}
where $(\mathbf{f}_1(t),\mathbf{f}_2(t),\mathbf{f}_3(t))$ is an adapted
frame along $\mathbf{r}(t)$. This is a useful method for generating surfaces in computer graphics and geometric modeling \cite{Wang97}.  
As proposed by Pottmann and Wagner \cite{pottmann1997principal, pottmann1998contributions} RMF generated sweeping surfaces, which are called \emph{profile surfaces}, 
are of grate importance in surface modeling \cite{WagnerJuttler99}. 
Regrettably, profile surfaces with rational spine curve and cross section are generally not rational, even if the spine is a PH curve. 
Here, we focus on surfaces with rational helix spines and cross section curves. 

Our plan for this paper is as follows. In Section \ref{sec:Rational_PH_curves} we discuss different ways of constructing rational PH curves,
 while Section \ref{sec:Rational_PH_helices} is devoted to rational helices. In Section \ref{Hermite} we show how the ideas of classical 
$C^1$ Hermite interpolation can be utilized to rational parameterizied helices. In Section \ref{sec:RRMF} we give a necessary and sufficient
 condition for rational helices of any degree to have rational rotation minimizing frames (RRMFs), which is followed by a nonexistence result for curves
with unit tangent of degree $2$. In Section \ref{sec:rational_approximation_RMF}
an approximation method to RMFs is given, while  this method is applied 
in Section \ref{sec:surface} to the problem of rational approximations to
profile surfaces. Finally, in Section \ref{concl} we conclude with some remarks about our future considerations.

\section{Another construction of rational PH curves}
\label{sec:Rational_PH_curves}

In this section, we derive rational PH curves with a new insight.
Afterwards, we suggest a simpler derivation for rational helices in the next section.
We first recall the definition of a rational PH curve \cite{Farouki08} for completeness.

\begin{definition}[Rational PH curve]
Let $\mathbf{r}(t)$ be a rational space curve. If its speed is a rational function, i.e.,
$|\mathbf{r}'(t)|=\sigma(t)$ for some rational function $\sigma(t)$, then $\mathbf{r}(t)$
is called a rational PH space curve.
\end{definition}

Now, we give a direct way of constructing a rational PH curve $\mathbf{r}(t)$ inspired by
the results in \cite{FaroukiSir11}. Let a
rational vector field $\mathbf{v}(t)$ and a rational
function $g(t)$ be given.
Assume that $\mathbf{v}(t)$ is tangent to $\mathbf{r}(t)$:
\begin{equation}\label{eq:dr}
 \mathbf{r}'(t)=g(t)\, \mathbf{v}(t),
\end{equation}
for some rational function $g(t)$.
Throughout the paper, we assume that $\mathbf{v}(t)$ and $\mathbf{v}'(t)$
are linearly independent for all $t$. Subsequently, we express
$\mathbf{r}(t)$ in terms of the basis $\left(\mathbf{v}(t), \mathbf{v}'(t), \mathbf{v}(t) \times \mathbf{v}'(t) \right)$
of $\mathbb{R}^3$. For this purpose, we set the rational PH curve as 
\begin{equation}\label{eq:rational_PH_curve}
 \mathbf{r}(t)=a_1(t)\,\mathbf{v}(t)+a_2(t)\,\mathbf{v}'(t)+a_3(t)\,\mathbf{v}(t) \times \mathbf{v}'(t).
\end{equation}
We assert that one can eliminate $a_1(t)$ and $a_2(t)$ from \eqref{eq:rational_PH_curve} by the PH curve property \eqref{eq:dr}.
To show this, we use the following three identities which we obtain from inner products of the basis elements
 $(\mathbf{v}(t),\mathbf{v}'(t),\mathbf{v}(t) \times \mathbf{v}'(t))$
by both sides of \eqref{eq:dr}:
\begin{equation}\label{eq:first_orthogonality}
 \mathbf{r}'(t) \cdot \mathbf{v}(t)=g(t)\,\mathbf{v}(t) \cdot \mathbf{v}(t), 
\end{equation}
\begin{equation}\label{eq:second_orthogonality}
 \mathbf{r}'(t) \cdot \mathbf{v}'(t)=g(t) \,\mathbf{v}(t) \cdot \mathbf{v}'(t),
\end{equation}
and
\begin{equation}\label{eq:third_orthogonality}
 \mathbf{r}'(t) \cdot (\mathbf{v}(t) \times \mathbf{v}'(t))=0.
\end{equation}
The right hand side of the equation \eqref{eq:third_orthogonality} is zero, since the three vectors $ \mathbf{v}(t), \mathbf{v}'(t), \mathbf{v}(t) \times \mathbf{v}'(t)$ (each supposedly different from zero)  are coplanar by the equation \eqref{eq:dr}.  
We first take the derivative of \eqref{eq:rational_PH_curve} to obtain
\begin{equation}\label{eq:rational_PH_curve_derivative}
 \mathbf{r}'(t)=a_1'(t)\mathbf{v}(t)+(a_1(t)+a_2'(t))\mathbf{v}'(t)+a_2(t)\mathbf{v}''(t)+a_3'(t)\mathbf{v}(t) \times \mathbf{v}'(t)+
a_3(t)\mathbf{v}(t) \times \mathbf{v}''(t).
\end{equation}
So, by the equations \eqref{eq:first_orthogonality} and \eqref{eq:rational_PH_curve_derivative}, one has 
\begin{equation}\label{eq:first_relation}
 (a_1'(t)-g(t)) \,\mathbf{v}(t) \cdot \mathbf{v}(t)+(a_1(t)+a_2'(t))\mathbf{v}(t) \cdot \mathbf{v}'(t)+
a_2(t)\,\mathbf{v} (t)\cdot \mathbf{v}''(t)=0.
\end{equation}
Similarly, by equations \eqref{eq:second_orthogonality} and \eqref{eq:rational_PH_curve_derivative}, it is obtained that
\begin{align}\label{eq:second_relation}
& (a_1'(t)-g(t)) \,\mathbf{v}(t) \cdot \mathbf{v}'(t)+(a_1(t)+a_2'(t))\,\mathbf{v}'(t) \cdot \mathbf{v}'(t)+
a_2(t)\,\mathbf{v}'(t) \cdot \mathbf{v}''(t)\\[1ex]\nonumber
-&a_3(t)\,\det(\mathbf{v}(t),\mathbf{v}'(t),\mathbf{v}''(t))=0,
\end{align}
where we use the following property of the triple scalar product:
\begin{equation}
\label{eq:detv}
(\mathbf{v}(t)\times \mathbf{v}'(t))\cdot \mathbf{v}''(t)= \det(\mathbf{v}(t),\mathbf{v}'(t),\mathbf{v}''(t)).
\end{equation}
One can observe that the equations \eqref{eq:first_relation}
and \eqref{eq:second_relation} enable us to eliminate $a_1'(t)$ and $g(t)$ to have 
\begin{align}\label{eq:first+second_relation}
 (&a_1(t)+a_2'(t))\left| \begin{array}{cc}
  \mathbf{v}(t) \cdot \mathbf{v}'(t)  & \mathbf{v}'(t) \cdot \mathbf{v}'(t)\\
 \mathbf{v}(t) \cdot \mathbf{v}(t)  &  \mathbf{v}(t) \cdot \mathbf{v}'(t) \\
\end{array} \right| 
 +
a_2(t)\left| \begin{array}{cc}
  \mathbf{v}(t) \cdot \mathbf{v}'(t)  &  \mathbf{v}'(t) \cdot \mathbf{v}''(t) \\
\mathbf{v}(t)\cdot \mathbf{v}(t) &  \mathbf{v}(t) \cdot \mathbf{v}''(t) \\
\end{array} \right|\\[1ex]\nonumber
&+a_3(t)(\mathbf{v}(t) \cdot \mathbf{v}(t))\det(\mathbf{v}(t),\mathbf{v}'(t),\mathbf{v}''(t))=0.
\end{align}
Also, by equations \eqref{eq:third_orthogonality} and \eqref{eq:rational_PH_curve_derivative} we have that
\begin{equation}\label{eq:third_relation}
a_2(t)\,\det(\mathbf{v}(t),\mathbf{v}'(t),\mathbf{v}''(t))+a_3'(t)\,(\mathbf{v}(t) \times \mathbf{v}'(t))^2+a_3(t)\,(\mathbf{v}(t) \times \mathbf{v}'(t))\cdot
(\mathbf{v}(t) \times \mathbf{v}''(t))=0.
\end{equation}
Note that, here one can further compute
\begin{equation*}
(\mathbf{v}(t)\times\mathbf{v}'(t))^2= (\mathbf{v}(t) \cdot \mathbf{v}(t))\, (\mathbf{v}'(t) \cdot \mathbf{v}'(t)) - (\mathbf{v} (t)\cdot \mathbf{v}'(t))^2,
\end{equation*}
and
\begin{equation*}
(\mathbf{v}(t)\times\mathbf{v}''(t)) \cdot  (\mathbf{v}(t)\times\mathbf{v}'(t))=( \mathbf{v}(t) \cdot \mathbf{v}(t))\,(\mathbf{v}''(t) \cdot \mathbf{v}'(t)) - (\mathbf{v}(t)\cdot\mathbf{v}'(t))\,(\mathbf{v}(t) \cdot\mathbf{v}''(t)).
\end{equation*}

If a rational vector field $\mathbf{v}(t)$ and a rational function $a_3(t)$ are given,
then by \eqref{eq:third_relation} the function $a_2(t)$ can be determined, and being
determined $a_2(t).$ We can obtain $a_1(t)$ by utilizing \eqref{eq:first+second_relation}.
Hence, all elements of rational PH curve given in the form \eqref{eq:rational_PH_curve} are found.
Therefore, we can state the following proposition.
\begin{proposition}
Let $\mathbf{v}(t)$ be any rational vector field and
$\mathbf{r}(t)$ be a rational space curve. Then $\mathbf{v}(t)$
is tangent to $\mathbf{r}(t)$ if and only if
there exists a rational function $a_3(t)$ such that 
$\mathbf{r}(t)$ is given as in \eqref{eq:rational_PH_curve},
where $a_1(t)$ and  $a_2(t)$ are obtained by \eqref{eq:first+second_relation}
and  \eqref{eq:third_relation}, respectively.
\end{proposition}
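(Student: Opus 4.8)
The plan is to prove the two implications separately, exploiting the computation carried out in the text above.

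For the ``only if'' direction, suppose $\mathbf{v}(t)$ is tangent to $\mathbf{r}(t)$, so that $\mathbf{r}'(t)=g(t)\,\mathbf{v}(t)$ for some rational function $g(t)$. Since $\left(\mathbf{v}(t),\mathbf{v}'(t),\mathbf{v}(t)\times\mathbf{v}'(t)\right)$ is a basis of $\mathbb{R}^3$ for every $t$ (by the standing assumption that $\mathbf{v}(t)$ and $\mathbf{v}'(t)$ are linearly independent), $\mathbf{r}(t)$ can be written uniquely in the form \eqref{eq:rational_PH_curve} with coefficient functions $a_1(t),a_2(t),a_3(t)$; these are rational because they are obtained by solving a linear system with rational (Gram-matrix) coefficients whose determinant is $(\mathbf{v}\times\mathbf{v}')^2\neq 0$. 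I would then take $a_3(t)$ to be exactly this coefficient, and verify that the relations \eqref{eq:first+second_relation} and \eqref{eq:third_relation} hold: these were derived above precisely by dotting \eqref{eq:dr} with the three basis vectors and using \eqref{eq:rational_PH_curve_derivative}, so the derivation in the text already establishes this half once one notes that every step is reversible as an implication from \eqref{eq:dr}.

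For the ``if'' direction, suppose we are given a rational $a_3(t)$, define $a_2(t)$ from \eqref{eq:third_relation} (legitimate since the coefficient $(\mathbf{v}\times\mathbf{v}')^2$ of $a_3'(t)$ is nowhere zero, so \eqref{eq:third_relation} determines $a_2(t)$ rationally in terms of $a_3(t)$ and its derivative), then define the combination $a_1(t)+a_2'(t)$ from \eqref{eq:first+second_relation} (legitimate since the first $2\times 2$ determinant there equals $-(\mathbf{v}\times\mathbf{v}')^2\neq 0$), hence $a_1(t)$ itself rationally. Set $\mathbf{r}(t)$ by \eqref{eq:rational_PH_curve} and define $g(t)$ by $g(t)=\mathbf{r}'(t)\cdot\mathbf{v}(t)/(\mathbf{v}(t)\cdot\mathbf{v}(t))$, i.e. by \eqref{eq:first_orthogonality}. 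It remains to show $\mathbf{r}'(t)=g(t)\,\mathbf{v}(t)$, equivalently that the three components of $\mathbf{r}'(t)-g(t)\mathbf{v}(t)$ in the chosen basis vanish. Expanding $\mathbf{r}'(t)$ via \eqref{eq:rational_PH_curve_derivative} and pairing with $\mathbf{v}(t)$, $\mathbf{v}'(t)$, $\mathbf{v}(t)\times\mathbf{v}'(t)$: the first pairing holds by the definition of $g(t)$; the third pairing is exactly \eqref{eq:third_relation}; and the second pairing reduces, after using \eqref{eq:first_relation} (which now holds by construction of $g$) to eliminate $a_1'(t)-g(t)$, to the single equation \eqref{eq:first+second_relation}, which holds by the choice of $a_1(t)$. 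Thus $\mathbf{r}'(t)-g(t)\mathbf{v}(t)$ is orthogonal to a basis and hence zero.

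The only genuinely delicate point is bookkeeping: one must make sure the linear-algebraic eliminations are invertible, i.e. that the relevant determinants ($\mathbf{v}\cdot\mathbf{v}$ and $(\mathbf{v}\times\mathbf{v}')^2$) do not vanish — both guaranteed by $\mathbf{v}\ne 0$ and the independence of $\mathbf{v},\mathbf{v}'$ — and that recovering $a_1(t)$ from $a_1(t)+a_2'(t)$ needs $a_2(t)$ in hand first, so the order ``$a_3\rightsquigarrow a_2\rightsquigarrow a_1$'' is essential. I expect no real obstacle beyond this; the statement is essentially a reformulation of Proposition~1, and indeed one could alternatively deduce it from Proposition~1 by matching $f(t)=\det(\mathbf{r}(t),\mathbf{v}(t),\mathbf{v}'(t))$ with $a_3(t)(\mathbf{v}\times\mathbf{v}')^2$, but the self-contained argument above is cleaner.
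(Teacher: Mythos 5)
Your proof is correct and follows essentially the same route as the paper's, which merely sketches both directions; your treatment of the converse --- resolving $\mathbf{r}'(t)-g(t)\,\mathbf{v}(t)$ against the basis $(\mathbf{v},\mathbf{v}',\mathbf{v}\times\mathbf{v}')$ and recovering \eqref{eq:second_relation} from \eqref{eq:first_relation} together with \eqref{eq:first+second_relation} --- is precisely the ``one can easily check'' step the paper leaves to the reader. One small correction: to solve \eqref{eq:third_relation} for $a_2(t)$ the coefficient that must be nonzero is the one multiplying $a_2(t)$, namely $\det(\mathbf{v}(t),\mathbf{v}'(t),\mathbf{v}''(t))$, not the coefficient $(\mathbf{v}(t)\times\mathbf{v}'(t))^2$ of $a_3'(t)$ that you cite; this nondegeneracy (which rules out the planar case) is an implicit hypothesis in the paper as well, so your argument stands once the correct determinant is invoked.
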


\begin{proof}
Let us assume that $\mathbf{v}(t)$
is tangent to $\mathbf{r}(t)$, then from  the argument above, $\mathbf{r}(t)$ is evoked in the desired form.
Conversely, let $\mathbf{r}(t)$ be given as in \eqref{eq:rational_PH_curve},
where $a_1(t)$ and  $a_2(t)$ are obtained by \eqref{eq:first+second_relation}
and  \eqref{eq:third_relation}, respectively. Then by taking the derivative of 
\eqref{eq:rational_PH_curve} and using  \eqref{eq:first+second_relation}
and  \eqref{eq:third_relation},  one can easily check that $\mathbf{v}(t)$
is tangent to $\mathbf{r}(t)$.
\end{proof}

If the rational field $v(t)$ has rational norm, then 
he following algorithm describes the construction of a rational PH curve. 
 \begin{algorithm}
   \label{algo:RPHcurve}  \textsc{RationalPHcurve}$(a_3(t),\mathbf{v}(t))$ \\[1ex]
  Input: Scalar function $a_3(t)$ and rational tangent vector $\mathbf{v}(t).$ \\[1ex]
    1. Construct the basis $\left(\mathbf{v}(t), \mathbf{v}'(t), \mathbf{v}(t) \times \mathbf{v}'(t) \right)$.\\[1ex]
    2. Apply \eqref{eq:third_relation} to get $a_2(t).$ \\[1ex]
    3. Apply \eqref{eq:first+second_relation} to get $a_1(t).$ \\  [1ex]         
 Output:   Rational PH curve \eqref{eq:rational_PH_curve}.
 \end{algorithm}
 
One can conclude by \eqref{eq:rational_PH_curve} that 
\begin{equation}
\label{eq:a3PHcurves}
 a_3(t)=\frac{\det(\mathbf{r}(t),\mathbf{v}(t),\mathbf{v}'(t))}{(\mathbf{v}(t) \times \mathbf{v}'(t))^2},
\end{equation}
which gives
\begin{equation}
 a_3(t)=\frac{f(t)}{(\mathbf{v}(t) \times \mathbf{v}'(t))^2},
\end{equation}
by comparison to equation \eqref{eq:equation_function_f}. This equation compares two 
approaches for obtaining rational PH curves. 

It is also worth to mention here that, if $\mathbf{v}(t)$ is unit,
then all the formulae above simplifies considerably. In that case 
the basis $(\mathbf{v}(t),\mathbf{v}'(t),\mathbf{v}''(t))$ becomes orthogonal
(not necessarily orthonormal),
since $|\mathbf{v}(t)|=1$ implies that $\mathbf{v}(t) \cdot \mathbf{v}'(t)=0$.

\section{Rational helices}
\label{sec:Rational_PH_helices}
The curvature and the torsion of a helical space curve are constant. Mainly for that reason
these curves are important. Additionally, rationality assumption add more importance for them. So we analyze rational helices in that section with that motivation. 
Note thet, the identification of helices with curves that have small circle tangent 
 indicatrices on the unit sphere, used in this section, has been exploited 
 by many authors \cite{BeltranMonterde07}, \cite{Monterde09},  \cite{Han11}, \cite{Faroukietal04}, \cite{Faroukietal09c}, \cite{Faroukietal09b}, \cite{Han10}.

From the derivation in the previous section, it is seen that
a rational curve can be obtained by a long computation.
In this section, we exploit
geometric features of rational helices to simplify the formula which gives rational PH curves.

Let $\mathbf{r}(t)$ be a rational helix such that
\begin{equation}\label{eq:PH_helix_definition}
 \mathbf{r}'(t)=\sigma(t)\, \mathbf{t}(t),
\end{equation}
where $\mathbf{t}(t)$ is the unit tangent of $\mathbf{r}(t)$
which is a rational vector field
and $\sigma(t)$ is the scalar speed which is a rational function.
Then, by the definition of a \emph{helix}  \cite{doCarmo76}, $\mathbf{t}(t)$ makes
a \emph{constant} angle $\psi$ with
a unit \emph{constant} vector field $\mathbf{u}$:
\begin{equation}
\label{eq:helixCon}
 \mathbf{t}(t) \cdot \mathbf{u}=\cos\psi \quad \mbox{for all} \,\, t.
\end{equation}
It is known that \cite{Farouki08}, from equation \eqref{eq:helixCon} it can be concluded that every rational helix is a PH curve.
In detail, since $\mathbf{t}(t)=\frac{\mathbf{r}'(t)}{|\mathbf{r}'(t)|}$ and by \eqref{eq:helixCon}, one obtains
\begin{equation*}
 \mathbf{r}'(t) \cdot \mathbf{u}=\cos\psi \,|\mathbf{r}'(t)|  \quad \mbox{for all} \,\, t,
\end{equation*}
which shows that $|\mathbf{r}'(t)|$ is rational.
 
\begin{figure}
\begin{center}
\includegraphics[angle=0,width=7cm]{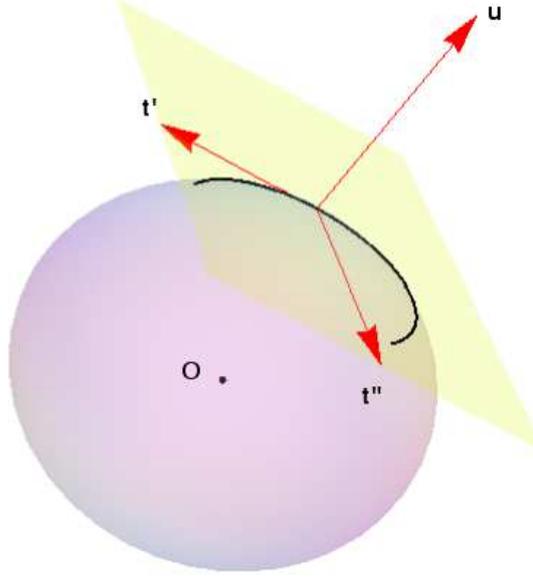}
\end{center}
\caption{\label{fig:vector_u}
Construction of the unit vector $\mathbf{u}$.
}
\end{figure}
When the unit tangent $\mathbf{t}(t)$ is given, it is a straitforward to obtain the unit vector $\mathbf{u}$;
it is the same direction with $\mathbf{t}'(t) \times \mathbf{t}''(t)$ (see Figure \ref{fig:vector_u}).
We also point out here that $\mathbf{t}(t)$ traces a small circle on the unit sphere
as $\mathbf{r}(t)$ is a helix \cite{doCarmo76}, while great circles give rise to
planar curves. 

We consider the rational orthonormal
basis $(\mathbf{w}_1(t),\mathbf{w}_2(t),\mathbf{w}_3(t))$
of $\mathbb{R}^3$, where
\begin{equation}\label{eq:helix_frame}
\begin{split}
 \mathbf{w}_1(t)&=\mathbf{t}(t), \\
\mathbf{w}_2(t)&=\frac{1}{\sin\psi}\,\mathbf{u} \times \mathbf{t}(t) \\
\mathbf{w}_3(t)&=\mathbf{w}_1(t)\times \mathbf{w}_2(t). \\
\end{split}
\end{equation}
Here, from the vector product of two vectors $\mathbf{w}_1(t)$ and $\mathbf{w}_2(t)$ observe that
\begin{equation}\label{equation_w_3}
\mathbf{w}_3(t)=\frac{1}{\sin\psi}\,\left(\mathbf{u}-\cos\psi\, \mathbf{t}(t)\right). 
\end{equation}
Detect also that, this frame is aligned with the Frenet-Serret Frame (FSF) up to suitable
orientation. Additionally, the main advantage in the use of this basis
is its simple nature, for instance its construction does not rely on
derivatives.

We express the rational helix $\mathbf{r}(t)$ according to this basis by
\begin{equation}\label{eq:rational_PH_helix}
 \mathbf{r}(t)=a_1(t)\,\mathbf{t}(t)+a_2(t)\,\mathbf{w}_2(t)
+a_3(t)\mathbf{w}_3(t).
\end{equation}
Taking the derivative of the rational helix $\mathbf{r}(t)$ yields in
\begin{equation}\label{eq:rational_PH_helix_derivative}
 \mathbf{r}'(t)=a_1'(t)\mathbf{t}(t)+a_1(t)\mathbf{t}'(t)+
a_2'(t)\mathbf{w}_2(t)+a_2(t)\mathbf{w}_2'(t)+a_3'(t)\mathbf{w}_3(t)+
a_3(t) \mathbf{w}_3'(t).
\end{equation} 
We are taking the following two equations into account: 
\begin{equation}\label{eq:first_orthogonality_helix}
 \mathbf{r}'(t) \cdot \mathbf{w}_2(t)=0, 
\end{equation}
\begin{equation}\label{eq:second_orthogonality_helix}
 \mathbf{r}'(t) \cdot \mathbf{w}_3(t)=0.
\end{equation}
So, by \eqref{eq:rational_PH_helix_derivative} and 
\eqref{eq:first_orthogonality_helix} we have
\begin{equation*}
 a_1(t)\, \mathbf{t}'(t) \cdot \mathbf{w}_2(t) +a_2'(t) + a_3(t)\,\mathbf{w}_3'(t) \cdot \mathbf{w}_2(t)=0, 
\end{equation*}
or equivalently, we have
\begin{equation}\label{eq:first_orthogonality_result_helix}
 a_2'(t) +(a_1(t)-\cot\psi \,a_3(t))\csc\psi\,\det(\mathbf{u},\mathbf{t}(t),\mathbf{t}'(t))=0.
\end{equation}
Accordingly, by equations \eqref{eq:rational_PH_helix_derivative} and \eqref{eq:second_orthogonality_helix}, we have
\begin{equation*}
a_1(t)\,\mathbf{t}'(t) \cdot \mathbf{w}_3(t)
+ a_2(t)\,\mathbf{w}_2'(t) \cdot \mathbf{w}_3(t) + a_3'(t)=0, 
\end{equation*}
or equivalently, we have
\begin{equation}\label{eq:second_orthogonality_result_helix}
 a_2(t)\,\cot\psi\,\csc\psi\,\det(\mathbf{u},\mathbf{t}(t),\mathbf{t}'(t)) +a_3'(t)=0.
\end{equation}
Furthermore, by comparison of \eqref{eq:PH_helix_definition}
and \eqref{eq:rational_PH_helix_derivative} we have 
\begin{equation*}
 \sigma(t)=a_1'(t)-\frac{a_2(t)}{\sin \psi}\, \det(u,\mathbf{t}(t),\mathbf{t}'(t)).
\end{equation*}

Therefore, we can conclude that given a rational
vector field $\mathbf{t}(t)$ which traces a
small circle on the unit sphere and a rational function
$a_3(t)$, a rational helix can be constructed. 
Then one can obtain $a_2(t)$ by \eqref{eq:second_orthogonality_result_helix}, and
finally $a_1(t)$ can be derived by \eqref{eq:first_orthogonality_result_helix}.

We are in a position to state the following proposition whose proof is omitted as being straightforward.
\begin{proposition}
Let $\mathbf{t}(t)$ be rational unit vector field which traces
a small circle on the unit sphere $S^2$ and
$\mathbf{r}(t)$ be a rational helix. Then $\mathbf{t}(t)$
is tangent to $\mathbf{r}(t)$ if and only if
there exists a rational function $a_3(t)$ such that 
$\mathbf{r}(t)$ is given as in \eqref{eq:rational_PH_helix},
where $a_2(t)$ and $a_1(t)$ are obtained by \eqref{eq:first_orthogonality_result_helix} 
and \eqref{eq:second_orthogonality_result_helix}, respectively.
\end{proposition}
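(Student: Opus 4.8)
The plan is to mimic the structure of the proof of Proposition 2, adapted to the helix-specific basis $(\mathbf{w}_1(t),\mathbf{w}_2(t),\mathbf{w}_3(t))$ of \eqref{eq:helix_frame}, and to verify that the two directions of the equivalence follow from the computations already carried out in the preceding paragraphs. The key observation underlying everything is that, because $\mathbf{r}'(t)$ is assumed to point along $\mathbf{t}(t)=\mathbf{w}_1(t)$, the conditions \eqref{eq:first_orthogonality_helix} and \eqref{eq:second_orthogonality_helix} are precisely the statements that $\mathbf{r}'(t)\cdot\mathbf{w}_2(t)=\mathbf{r}'(t)\cdot\mathbf{w}_3(t)=0$; since $(\mathbf{w}_1,\mathbf{w}_2,\mathbf{w}_3)$ is an orthonormal frame, these two scalar equations together are equivalent to $\mathbf{r}'(t)$ being parallel to $\mathbf{w}_1(t)=\mathbf{t}(t)$. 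So the whole proposition reduces to: expanding $\mathbf{r}'(t)$ in the frame via \eqref{eq:rational_PH_helix_derivative}, the vanishing of its $\mathbf{w}_2$- and $\mathbf{w}_3$-components is equivalent to the pair of differential relations \eqref{eq:first_orthogonality_result_helix} and \eqref{eq:second_orthogonality_result_helix}.

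For the forward direction I would argue as follows. Suppose $\mathbf{t}(t)$ is tangent to the rational helix $\mathbf{r}(t)$, i.e.\ \eqref{eq:PH_helix_definition} holds with rational $\sigma(t)$. Write $\mathbf{r}(t)$ in the frame as in \eqref{eq:rational_PH_helix}; since $(\mathbf{w}_1,\mathbf{w}_2,\mathbf{w}_3)$ is an orthonormal rational basis and $\mathbf{r}(t)$ is rational, the coefficient functions $a_1(t)=\mathbf{r}(t)\cdot\mathbf{w}_1(t)$, $a_2(t)=\mathbf{r}(t)\cdot\mathbf{w}_2(t)$, $a_3(t)=\mathbf{r}(t)\cdot\mathbf{w}_3(t)$ are rational. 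Now \eqref{eq:first_orthogonality_helix} and \eqref{eq:second_orthogonality_helix} hold automatically, and substituting \eqref{eq:rational_PH_helix_derivative} and using the frame derivative formulas (which give $\mathbf{w}_3'\cdot\mathbf{w}_2$, $\mathbf{t}'\cdot\mathbf{w}_2$, $\mathbf{t}'\cdot\mathbf{w}_3$, $\mathbf{w}_2'\cdot\mathbf{w}_3$ in terms of $\det(\mathbf{u},\mathbf{t}(t),\mathbf{t}'(t))$ and the angle $\psi$) yields exactly \eqref{eq:first_orthogonality_result_helix} and \eqref{eq:second_orthogonality_result_helix}. Thus the chosen $a_3(t)$ works and $a_1,a_2$ are the ones produced by those equations.

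For the converse I would take the rational function $a_3(t)$ as given, define $a_2(t)$ by solving \eqref{eq:second_orthogonality_result_helix} (an integration, legitimate as we are simply positing that such a rational $a_2$ exists, or more carefully noting the equation determines $a_2$ up to a constant once $a_3$ is fixed), then define $a_1(t)$ from \eqref{eq:first_orthogonality_result_helix}, and set $\mathbf{r}(t)$ by \eqref{eq:rational_PH_helix}. Differentiating \eqref{eq:rational_PH_helix} and grouping terms along $\mathbf{w}_2(t)$ and $\mathbf{w}_3(t)$ using the same frame-derivative identities, the coefficients of $\mathbf{w}_2$ and $\mathbf{w}_3$ in $\mathbf{r}'(t)$ are precisely the left-hand sides of \eqref{eq:first_orthogonality_result_helix} and \eqref{eq:second_orthogonality_result_helix}, hence vanish; therefore $\mathbf{r}'(t)$ is a scalar multiple of $\mathbf{w}_1(t)=\mathbf{t}(t)$, i.e.\ $\mathbf{t}(t)$ is tangent to $\mathbf{r}(t)$, and $\mathbf{r}(t)$ is a rational helix because $\mathbf{t}(t)$ is rational with the prescribed small-circle indicatrix. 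The only genuine subtlety—the step I expect to be the main obstacle—is the bookkeeping of the derivatives of the frame vectors, namely producing clean expressions for $\mathbf{w}_2'(t)$ and $\mathbf{w}_3'(t)$ in the frame; this uses $\mathbf{u}$ constant, \eqref{equation_w_3}, and the identity $\mathbf{w}_2'(t)\cdot\mathbf{w}_1(t)=-\mathbf{w}_1'(t)\cdot\mathbf{w}_2(t)$ together with $\mathbf{t}'(t)\cdot\mathbf{w}_3(t)=\csc\psi\,\det(\mathbf{u},\mathbf{t}(t),\mathbf{t}'(t))$-type relations, all of which are already implicit in the derivation leading to \eqref{eq:first_orthogonality_result_helix}–\eqref{eq:second_orthogonality_result_helix}. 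Since all these computations appear above, the proof is indeed ``straightforward'' as claimed, and I would simply state that it parallels the proof of Proposition 2 with \eqref{eq:first_orthogonality_result_helix} and \eqref{eq:second_orthogonality_result_helix} replacing \eqref{eq:first+second_relation} and \eqref{eq:third_relation}.
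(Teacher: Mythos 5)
Your proof is correct and follows exactly the route the paper intends: the paper omits the proof as ``straightforward'' precisely because the derivation of \eqref{eq:first_orthogonality_result_helix} and \eqref{eq:second_orthogonality_result_helix} in the preceding paragraphs already establishes both directions, and your reconstruction of the frame-derivative bookkeeping ($\mathbf{w}_3'=-\cot\psi\,\mathbf{t}'$, $\mathbf{t}'\cdot\mathbf{w}_3=0$, etc.) matches it. One small correction that only makes your converse easier: \eqref{eq:second_orthogonality_result_helix} is \emph{algebraic} in $a_2(t)$ (it contains $a_3'$, not $a_2'$) and \eqref{eq:first_orthogonality_result_helix} is algebraic in $a_1(t)$, so no integration is needed and rationality of $a_1,a_2$ is automatic.
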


The following algorithm describes the construction of a rational helix. 

    \begin{algorithm}\textsc{RationalHelix}$(a_3(t),\mathbf{r}(t))$
     \label{algo:RPHhelix}\\[1ex]
    Input: Scalar function $a_3(t)$ and unit tangent vector $\mathbf{t}(t).$ \\[1ex]
    1. Find the constant direction of rational helix \\[1ex]
   \begin{equation}
 \mathbf{u}  =\frac{\mathbf{t}'(t)\times\mathbf{t}''(t)}{\|\mathbf{t}'(t)\times\mathbf{t}''(t)\|}.
   \end{equation}
        2.  Apply  \eqref{eq:helixCon} to get the constant angle $\psi.$  \\  [1ex]       
        3. Construct the basis $(\mathbf{w}_1(t),\mathbf{w}_2(t),\mathbf{w}_3(t))$ from equations \eqref{eq:helix_frame}.\\[1ex]
        4. Find $a_2(t)$ from equation \eqref{eq:second_orthogonality_result_helix} and secondly apply  \eqref{eq:first_orthogonality_result_helix} to get $a_1(t).$\\[1ex]
    Output: Rational helix curve \eqref{eq:rational_PH_helix}. 
    \end{algorithm}
    
\begin{example}
\label{ex:cubic}
In this example $\mathbf{t}(t)$ is chosen to be
\begin{equation}\label{eq:Steographic_projection}
 \mathbf{t}(t)=\frac{(2\,b_1(t),2\,b_2(t),b_1^2(t)+b_2^2(t)-1)}{b_1^2(t)+b_2^2(t)+1},
\end{equation}
where $b_1(t)=-\frac{1}{2} t+1$ and  $b_2(t)= 2t-1 $. As, $b_1(t)$ and $b_2(t)$
are linear $\mathbf{t}(t)$ traces a circle on the unit sphere (see Fig. \ref{fig:example_1}).
The rational function $a_3(t)$ is chosen as a rational Bezier curve of degree $3$ given by
\begin{equation}\label{eq:rational_Bezier}
 a_3(t)=\frac{c_0\, B_0^3(t)+c_1 w_1 \,B_1^3(t)+c_2 w_2 \,B_2^3(t)+c_3\, B_3^3(t)}
{B_0^3(t)+w_1\, B_1^3(t)+w_2\, B_2^3(t)+B_3^3(t)},
\end{equation}
where  $B_i^3(t)$ the ith Bernstein polynomial of degree 3:
\begin{equation*}
 B_i^3(t)={{3}\choose{i}}\, t^{i} (1-t)^{3-i} , \, i=1,2,3.
\end{equation*}
Especially we take 
\begin{equation*}
 c_0 = 1, \, c_1 = 2, \, c_2 = -3, \, c_3 = \frac{1}{2}, \, w_1 = 3, \, w_2 = 1.
\end{equation*}
Hence, we obtain a rational helix of degree $9$ (see Fig. \ref{fig:example_1}).  
\begin{figure}
\begin{center}
\includegraphics[angle=0,width=6cm]{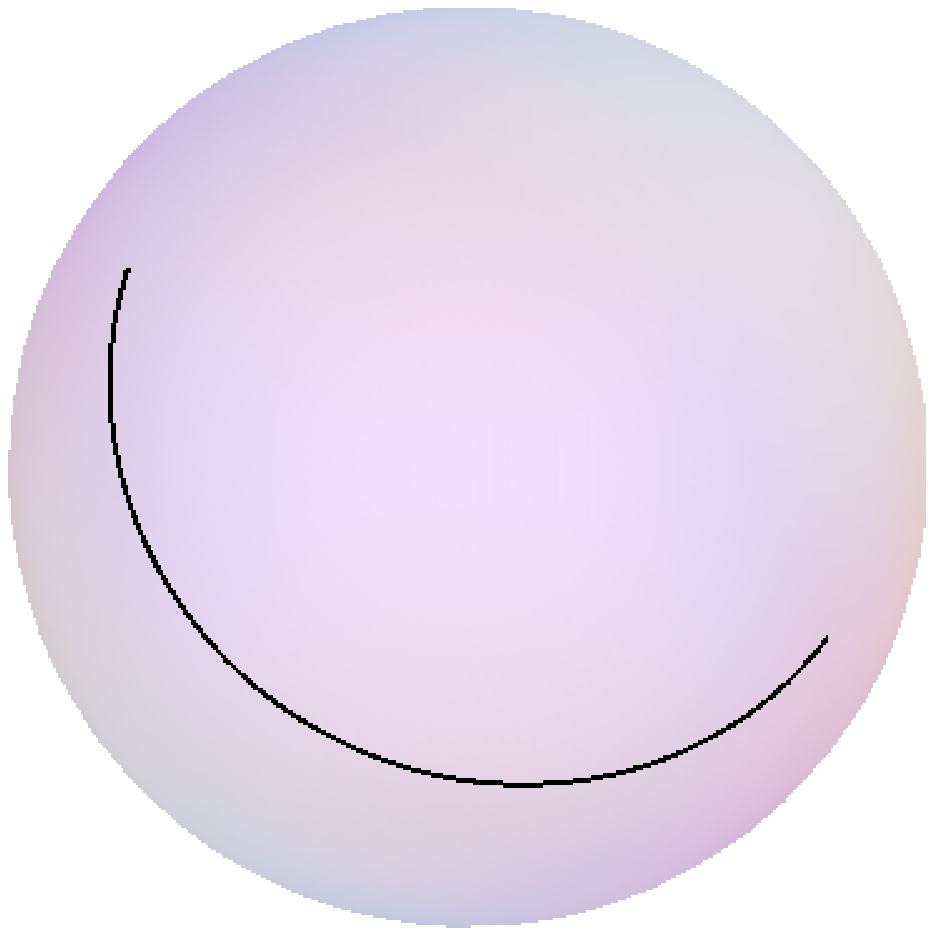}
\includegraphics[angle=0,width=6cm]{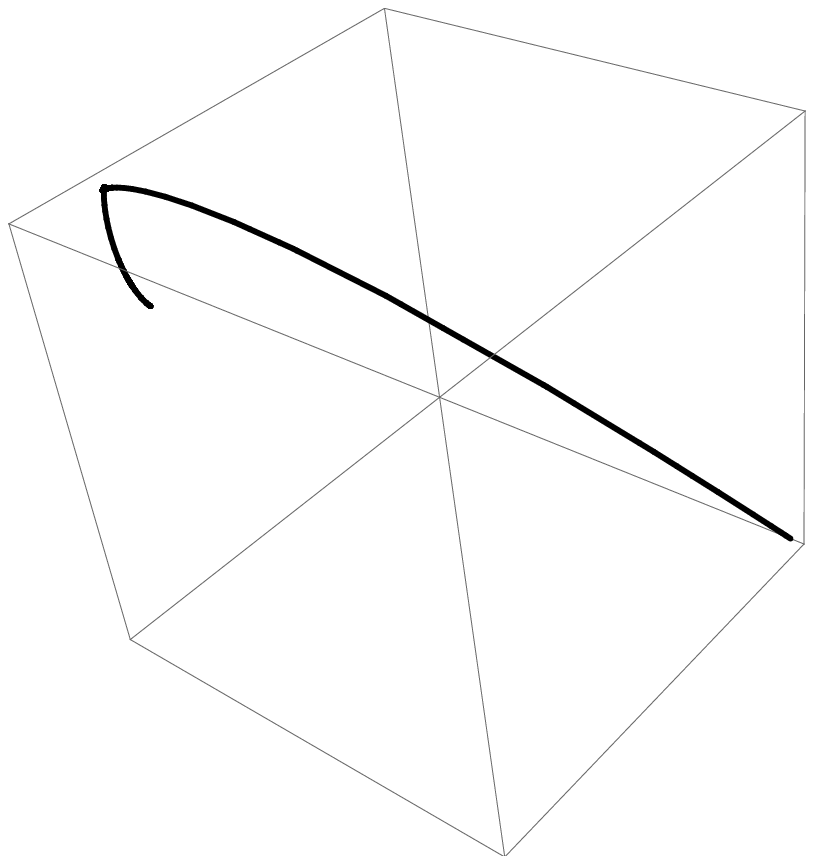}
\end{center}
\caption{\label{fig:example_1} Left: Tangent indicatrix of the rational helix in Example
\ref{ex:cubic}. Right: The rational helix of degree $9$ ($0<t<1$).
}
\end{figure}
\end{example}

\begin{example}
\label{ex:PHcubic}
We choose in \eqref{eq:Steographic_projection} that $b_1(t)=-3\, t+1$ and  $b_2(t)= 2\,t+3 $
and we choose 
\begin{equation*}
a_3(t)=\frac{t \,(t^2 + t + 1)}{13 \,t^2 +6 \,t+ 11}.
\end{equation*}
Then, we have a PH cubic given by
\begin{equation*}
\mathbf{r}(t)=\tfrac{1}{22\sqrt{13}} (5 + 6\, t - 9\, t^2,4+18\,t+6\,t^2,
 6 + 27\, t + 9\, t^2 + 13 \,t^3). 
\end{equation*}
This curve is depicted in Fig. \ref{fig:example_2}.

\begin{figure}
\begin{center}
\includegraphics[angle=0,width=6cm]{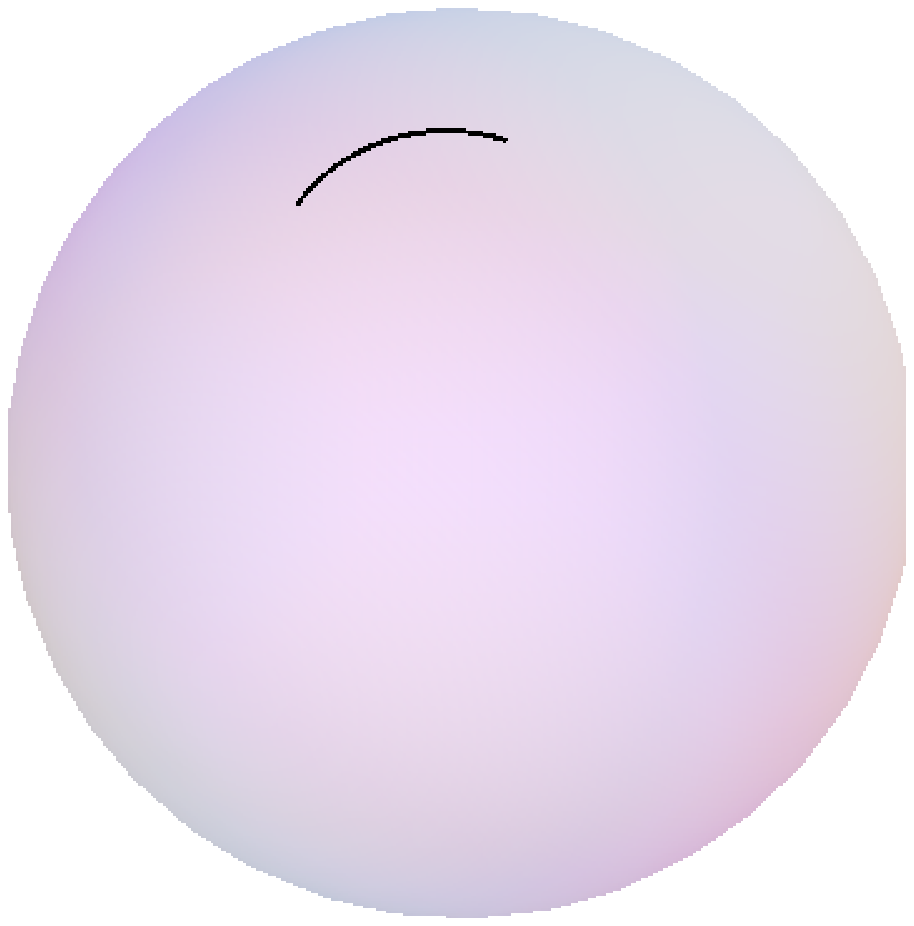}
\includegraphics[angle=0,width=6cm]{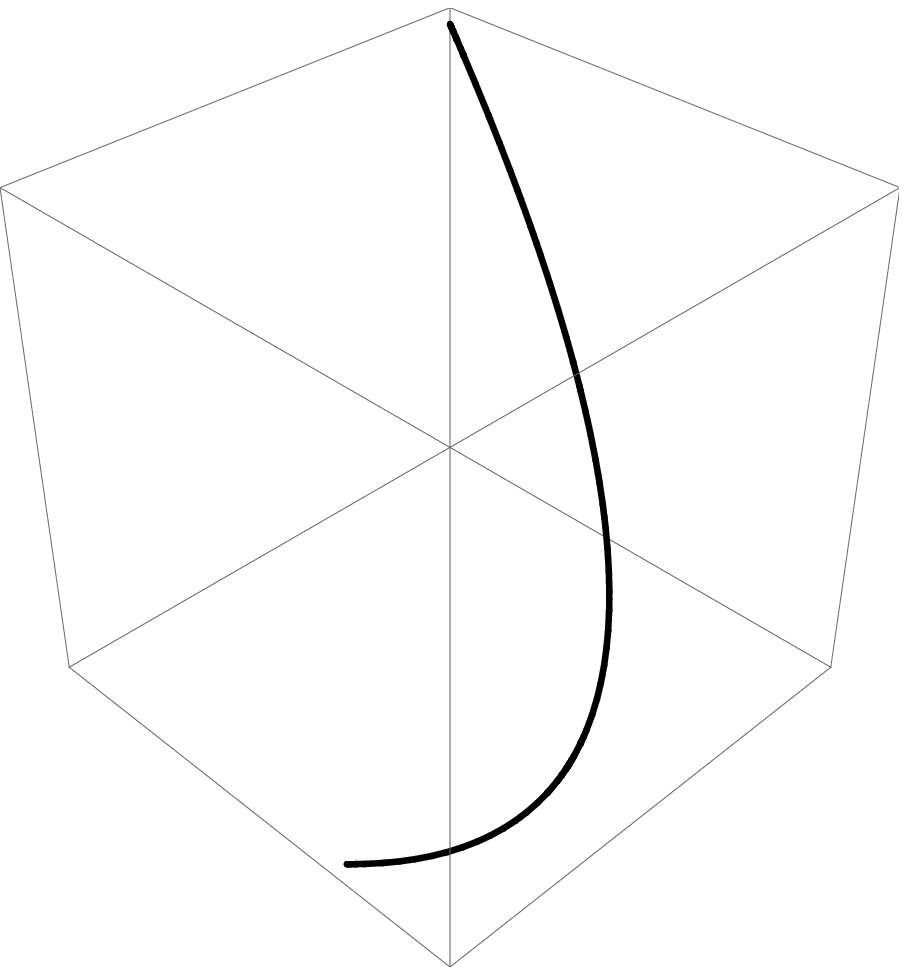}
\end{center}
\caption{\label{fig:example_2} Left: Tangent indicatrix of the PH cubic polynomial curve in
Example \ref{ex:PHcubic}. Right: The PH cubic polynomial curve ($0<t<1$).
}
\end{figure}
\end{example}

\section{$C^1$ Hermite interpolation}
\label{Hermite}
Rational PH curves are shown to be flexible enough for $C^1$ Hermite
interpolation in \cite{FaroukiSir11}. In this section, we outline that
the construction of rational helices given in the previous section
allows to interpolate given $C^1$ Hermite data 
\begin{equation}
\label{eq:HermiteData}
 \mathbf{r}(0)=p_0, \quad \mathbf{t}(0)=\mathbf{t}_0, \quad
\mathbf{r}(1)=p_1, \quad \mathbf{t}(1)=\mathbf{t}_1.
\end{equation}

Let us establish the rational helix $\mathbf{r}(t)$  with unit tangent $\mathbf{t}(t)$,
by interpolating these given Hermite data \eqref{eq:HermiteData}.
A way of finding a suitable $\mathbf{t}(t)$ is using the inverse of the
spherical projection \cite{FaroukiSir11}. First, the end tangents
$\mathbf{t}_0$ and $\mathbf{t}_1$ are projected on the plane by the spherical
projection, after finding the straight line or a circle
joining the two projection points, $\mathbf{t}(t)$
is obtained to be the inverse image of the straight line or the circle. Namely,
if the straight line or the circle is given by $(b_1(t),b_2(t))$, then
$\mathbf{t}(t)$ is given by \eqref{eq:Steographic_projection}.

Now, it remains to find the rational function $a_3(t)$.
This function $a_3(t)$ must have at least $6$ degrees of freedom,
in order to mach the end points $p_0$ and $p_1$ \cite{FaroukiSir11}.
Therefore, we choose it to be a rational Bezier curve of degree $3$
as  in \eqref{eq:rational_Bezier}. Then, we need $a_3(0),a_3'(0),a_3''(0),a_3(1),a_3'(1),a_3''(1)$
to determine the coefficients $c_0,c_1,c_2,c_3$
and the weights $w_1,w_2$.
 We claim that one can find 
$a_3(0),a_3'(0),a_3''(0),a_3(1),a_3'(1),a_3''(1)$ by the given data and
$\mathbf{t}(t)$. In fact, by \eqref{eq:rational_PH_helix} we have that
\begin{equation}\label{eq:formula_a_3}
 a_3(t)=\mathbf{r}(t) \cdot \mathbf{w}_3(t)=\frac{1}{\sin\psi}\,(\mathbf{r}(t)
\cdot \mathbf{u}-\cos\psi\, \mathbf{r}(t) \cdot \mathbf{t}(t)). 
\end{equation}
Consequently, $a_3(0),a_3(1)$ can be acquired. But taking derivatives of
\eqref{eq:formula_a_3} with respect to $t$ does not give us
$a_3'(0), a_3'(1)$. Therefore, use
\begin{equation}\label{eq:formula_a_2}
 a_2(t)=\mathbf{r}(t) \cdot \mathbf{w}_2(t)=\frac{1}{\sin\psi}
\det(\mathbf{t}(t),\mathbf{u},\mathbf{r}(t)),
\end{equation}
which can be obtained by \eqref{eq:rational_PH_helix}.
It is obvious that one can find $a_2(0), a_2'(0), a_2(1), a_2'(1)$
by making use of \eqref{eq:formula_a_2}, and
then \eqref{eq:second_orthogonality_result_helix}
gives us $a_3''(0), a_3''(1)$.

\begin{remark}
Recall that in the $C^1$ Hermite interpolation problem
for the function given in \eqref{eq:equation_function_f},
it is shown that $f(0), f'(0), f''(0), f(1), f'(1), f''(1)$ can be obtained \cite{FaroukiSir11}. 
Hence, we do not lose information by using an alternative method to construct
rational helical curves.  
\end{remark}

\begin{algorithm}
 Input: Hermite data ($p_0, \mathbf{t}_0,p_1, \mathbf{t}_1$)\\[1ex]
 1. Obtain tangent vector $ \mathbf{t}(t)$ by spherical projection. \\[1ex]
 2. Find $\mathbf{u}$ and $\cos \psi.$\\[1ex]
 3. Compute $a_3(0)$ and $a_3(1)$ from \eqref{eq:formula_a_3}.\\[1ex]
 4. Compute $a_3'(0)$ and $a_3'(1)$ from \eqref{eq:formula_a_2} and \eqref{eq:second_orthogonality_result_helix}.\\[1ex]
 5. Compute  $a_2'(0)$ and $a_2'(1)$ from derivative of equations  \eqref{eq:formula_a_2} and \eqref{eq:second_orthogonality_result_helix}.  \\[1ex]
 6. Computed $a_3''(0)$ and $a_3''(1)$ from equation \eqref{eq:second_orthogonality_result_helix}.\\[1ex]
 Output:  Rational helix curve \eqref{eq:rational_PH_helix}.
\end{algorithm}

\begin{example}
\label{ex:hermite}
We interpolate the $C^1$ Hermite data
given by 
\begin{equation*}
 p_0=(0,0,0), \, \mathbf{t}_0=(1,0,0), \, p_1 =(0, 1, -1),
\, \mathbf{t}_1=\frac{1}{3}(1, -2,1). 
\end{equation*}
We can interpolate the unit tangent $\mathbf{t}(t)$ by using
linear polynomials $b_1(t)=1, \, b_2(t)=-2 t.$
We find the rational Bezier curve \eqref{eq:rational_Bezier} of degree $3$ by 
\begin{equation*}
 c_0=c_1=c_2=0, \, c_3= -\frac{1}{3}, \, w_1= \frac{4}{9}, \, w_2=\frac{4}{9}.
\end{equation*}
This curve is depicted in Fig. \ref{fig:Hermite_1}.

\begin{figure}
\begin{center}
\includegraphics[angle=0,width=6cm]{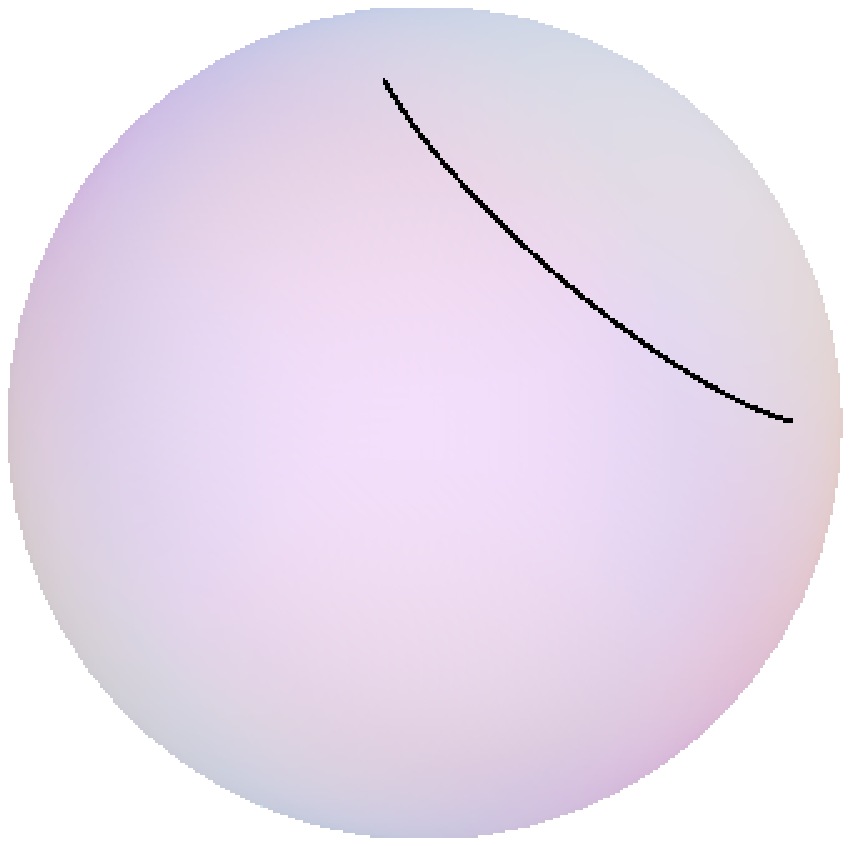}
\includegraphics[angle=0,width=6cm]{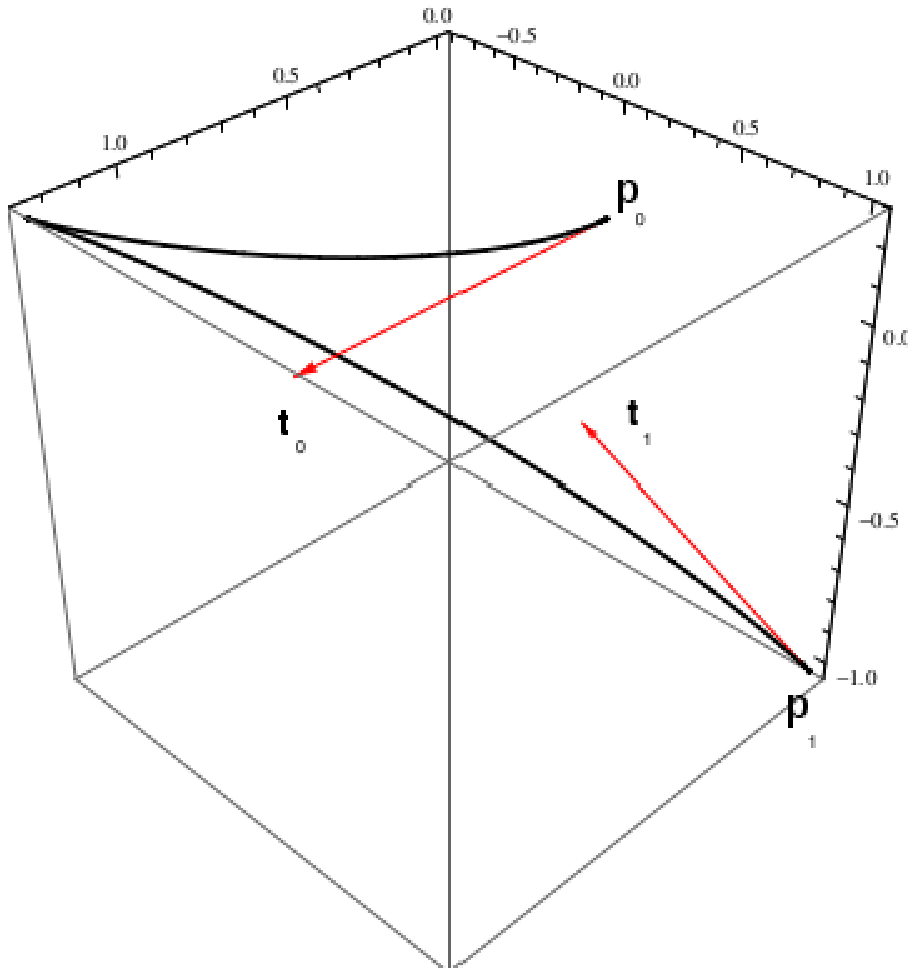}
\end{center}
\caption{\label{fig:Hermite_1} Left: The tangent indicatrix $\mathbf{t}(t)$ and (right) 
rational helical  $C^1$ Hermite interpolant for data $p_0=(0,0,0), \mathbf{t}_0=(1,0,0)$ and $p_1 =(0, 1, -1), \mathbf{t}_1=\frac{1}{3}(1, -2,1)$ (specified in Example \ref{ex:hermite}).
} 
\end{figure}
\end{example}
Note that we cannot claim that our interpolation method  is more efficient than the one given in \cite{FaroukiSir11}. But we wanted to demonstrate that  our method applies to 
that problem. It can be considered to remove the singularities as the next step.

\section{RRMFs on rational helices}
\label{sec:RRMF}
An \emph{adapted frame} along a space curve $\mathbf{r}(t)$
is an orthonormal basis $(\mathbf{f}_1(t),\mathbf{f}_2(t),\mathbf{f}_3(t))$ of
$\mathbb{R}^3$ such that $\mathbf{f}_1(t)=\mathbf{r}'(t)/|\mathbf{r}'(t)|$.
Recall that, among other equivalent definitions, the condition  
\begin{equation}\label{eq:rmfcon}
\mathbf{f}_2'(t)\cdot \mathbf{f}_3(t)\equiv0,
\end{equation}
is a necessary-and-sufficient condition for the frame to be rotation minimizing \cite{Han08}.
For computational purposes,  
it is desired that an adapted frame is
rational, such as a \emph{rational rotation
minimizing frame} (RRMF). However, not every PH curve admits an RRMF, it is shown by Han \cite{Han08} and  \c{S}eng\"{u}ler-\c{C}ift\c{c}i, Vegter   \cite{fatma-gert-2013} that
a cubic PH,  helical PH quintic curves do not admit an RRMF, respectively.

Now, let us assume that $\mathbf{r}(t)$ is a rational helix given by
\eqref{eq:rational_PH_helix}. 
In order to find the RRMF condition for rational helices, we utilize the same basis
$(\mathbf{w}_1(t),\mathbf{w}_2(t),\mathbf{w}_2(t))$ in \eqref{eq:helix_frame}.
Observe that an RMF is given by a rotation in the normal plane:
\begin{equation*}
 \mathbf{f}_1(t)=\mathbf{t}, \quad
 \begin{pmatrix}
  \mathbf{f}_2(t)  \\ \mathbf{f}_3(t) 
  \end{pmatrix}
  =
  \begin{pmatrix}
   \cos\theta(t) & -\sin \theta(t) \\   \sin \theta(t) &   \quad\cos\theta(t)
  \end{pmatrix}
 \begin{pmatrix}
\mathbf{w}_2(t)\\  \mathbf{w}_3(t)
  \end{pmatrix},
\end{equation*}
where 
\begin{equation}
\label{eq:theta}
 \theta(t) - \theta_0=-\int \tau \, |\mathbf{r}'(t)| \,dt,
\end{equation}
with the integration constant $\theta_0$ and the \emph{torsion} \eqref{eq:torsion} of the spine curve \cite{Farouki08}.
Therefore, an RMF
is not rational in general. As the frame $(\mathbf{w}_1(t),\mathbf{w}_2(t),\mathbf{w}_2(t))$
is rational, $(\mathbf{f}_1(t),\mathbf{f}_2(t),\mathbf{f}_3(t))$ is also rational
if and only if there exist rational functions
$\alpha, \beta, \gamma$ such that
\begin{equation*}
 \cos \theta(t) = \frac{\alpha}{\gamma} \quad \mbox{and} \quad \sin \theta(t) = \frac{\beta}{\gamma}.
\end{equation*}
Therefore, this is equivalent to the existence of relatively prime polynomials
$a(t)$ and $b(t)$ \cite{Han08}
satisfying
\begin{equation*}
 \cos \theta(t) = \frac{a^2(t)-a^2(t)}{a^2(t)+b^2(t)} \quad \mbox{and} \quad \sin \theta(t) = \frac{2\,a(t)\,b(t)}{a^2(t)+b^2(t)}.
\end{equation*}
Here observe that 
\begin{equation}\label{eq:theta_a_b}
\tan \frac{\theta(t)}{2}=\frac{a(t)}{b(t)}.  
\end{equation}

The following condition is gives a necessary and sufficient 
condition for a rational helix to have RRMFs \cite{fatma-gert-2013, Han08, FaroukiHan03}.  

\begin{proposition}\label{RRMF_condition}
Let a rational helix $\mathbf{r}(t)$ be given by (\ref{eq:rational_PH_helix}).
Then $\mathbf{r}(t)$ has an RRMF
if and only if there exist relatively prime polynomials $a(t)$ and $b(t)$ satisfying
\begin{equation}\label{eq:rrmf}
\frac{a(t) \,b'(t)-a'(t) \,b(t)}{a^2(t)+b^2(t)}=\frac{1}{2}\cot\psi\,|\mathbf{t}'(t)|.
\end{equation}
\end{proposition}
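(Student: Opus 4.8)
The plan is to reduce the existence of an RRMF to a single scalar differential equation for the angle $\theta(t)$ relating an arbitrary RMF to the rational orthonormal frame $(\mathbf{w}_1,\mathbf{w}_2,\mathbf{w}_3)$ of \eqref{eq:helix_frame}, and then to parametrize rationally the solutions of that equation. First I would note that $(\mathbf{w}_1,\mathbf{w}_2,\mathbf{w}_3)$ agrees, up to orientation, with the Frenet--Serret frame $(\mathbf{t},\mathbf{n},\mathbf{b})$ of $\mathbf{r}(t)$: differentiating \eqref{eq:helixCon} gives $\mathbf{t}'(t)\cdot\mathbf{u}=0$, so $\mathbf{u}$ lies in the $\mathbf{t}$--$\mathbf{b}$ plane, i.e.\ $\mathbf{u}=\cos\psi\,\mathbf{t}+\sin\psi\,\mathbf{b}$, whence \eqref{equation_w_3} gives $\mathbf{w}_3=\mathbf{b}$ and $\mathbf{w}_2=\mathbf{n}$. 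Consequently, as recalled around \eqref{eq:theta}, every RMF is obtained from $(\mathbf{w}_2,\mathbf{w}_3)$ by a planar rotation through $\theta(t)$ with $\theta'(t)=-\tau(t)\,|\mathbf{r}'(t)|$. I would then invoke the helix relation $\tau/\kappa=\cot\psi$ --- which follows from the Frenet equations together with the constancy of $\mathbf{u}=\cos\psi\,\mathbf{t}+\sin\psi\,\mathbf{b}$ --- and the identity $|\mathbf{t}'(t)|=\kappa(t)\,|\mathbf{r}'(t)|$, to rewrite the defining ODE of the RMF angle as
\[
\theta'(t)=-\cot\psi\,|\mathbf{t}'(t)|,
\]
whose right-hand side is a rational function of $t$ since $\kappa$ and $|\mathbf{r}'|$ are rational for a PH curve.

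Next I would translate rationality of the frame: because $(\mathbf{w}_1,\mathbf{w}_2,\mathbf{w}_3)$ is rational, the rotated frame $(\mathbf{f}_1,\mathbf{f}_2,\mathbf{f}_3)$ is rational if and only if $\cos\theta(t)$ and $\sin\theta(t)$ are rational functions. By the standard rational parametrization of the unit circle used in \cite{Han08}, this is equivalent to the existence of relatively prime polynomials $a(t),b(t)$ with $\tan(\theta(t)/2)=a(t)/b(t)$, i.e.\ $\cos\theta=(a^2-b^2)/(a^2+b^2)$ and $\sin\theta=2ab/(a^2+b^2)$, cf.\ \eqref{eq:theta_a_b}. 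Differentiating $\tan(\theta/2)=a/b$ and using $\sec^2(\theta/2)=(a^2+b^2)/b^2$ gives $\theta'=2(a'b-ab')/(a^2+b^2)$; equating this with $\theta'=-\cot\psi\,|\mathbf{t}'(t)|$ yields precisely \eqref{eq:rrmf} (the overall sign being absorbed into the orientation of $(\mathbf{w}_2,\mathbf{w}_3)$, equivalently into the labeling of $a,b$). Conversely, given relatively prime $a,b$ satisfying \eqref{eq:rrmf}, one defines $\theta$ by $\tan(\theta/2)=a/b$; the same computation shows $\theta'=-\cot\psi\,|\mathbf{t}'|=-\tau|\mathbf{r}'|$, so rotating $(\mathbf{w}_2,\mathbf{w}_3)$ by $\theta$ produces an RMF, and it is rational since $\cos\theta,\sin\theta$ are rational expressions in $a,b$.

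I expect the main obstacle to be the bookkeeping around (i) the helix identity $\tau=\kappa\cot\psi$ with consistent sign and orientation conventions --- in particular excluding or separately treating the degenerate case $\psi=\pi/2$ (planar curves), where $\cot\psi=0$ and \eqref{eq:rrmf} forces $a/b$ to be constant --- and (ii) the passage from ``$\cos\theta,\sin\theta$ rational'' to the half-angle form with relatively prime \emph{polynomial} $a,b$, which rests on the lemma of \cite{Han08} on rational parametrizations of $S^1$ (clearing denominators and cancelling common factors). The remaining steps are routine differentiation.
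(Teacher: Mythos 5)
Your proposal is correct and follows essentially the same route as the paper: identify $(\mathbf{w}_2,\mathbf{w}_3)$ with the (rational) normal-plane frame, use $\theta'=-\tau\,|\mathbf{r}'|$ together with $\kappa=|\mathbf{t}'|/|\mathbf{r}'|$ and the helix relation $\tau/\kappa=\cot\psi$, and differentiate the half-angle parametrization $\tan(\theta/2)=a/b$ to obtain \eqref{eq:rrmf}. Your write-up is in fact somewhat more careful than the paper's (explicit treatment of sign/orientation conventions, the planar case $\psi=\pi/2$, and the reduction of ``$\cos\theta,\sin\theta$ rational'' to relatively prime polynomials), but the underlying argument is the same.
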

\begin{proof}
Obviously, by derivation of equation \eqref{eq:theta_a_b} 
\begin{equation*}
\frac{\theta'(t)}{2}=\frac{a(t)\,b'(t)-a'(t)\,b(t)}{a^2(t)+b^2(t)},  
\end{equation*}
and by equation \eqref{eq:theta}
\begin{equation*}
\theta'(t)=-\tau \, |\mathbf{r}'(t)|,
\end{equation*}
where $\kappa$ and $\tau$ the curvature and the torsion of the rational helix $\mathbf{r}(t)$.
By definition of curvature
\begin{equation*}
 \kappa = \frac{|\mathbf{t}'(t)|}{|\mathbf{r}'(t)|}.
\end{equation*}
Since $\mathbf{r}(t)$ is a helix, we have \cite{doCarmo76},
\begin{equation*}
 \frac{\kappa}{\tau} =\tan \psi.  
\end{equation*}
Thus the results follows.
\end{proof}

\begin{remark}\label{Remark_RRMF}
Proposition \ref{RRMF_condition} asserts that having an RRMF is
completely related to the unit tangent $\mathbf{t}(t)$. For helices,
the unit tangent is a small circle and therefore it can be obtained
by an inverse image of a circle or a straight line on the plane.
If the pole point is chosen as a point on the small circle, then
the image under the stereographic projection is a straight line.
Further choosing the coordinates properly, one can assume that
the image is a straight line parallel to one of the coordinate axes (see Fig. \ref{fig:chosing_projection}).
Working only with special choice of pole point and coordinate axes does not
affect the existence of nonexistence of RRMFs \cite{BartonJuttlerWang10}.
\end{remark} 

\begin{figure}
\begin{center}
\includegraphics[angle=0,width=8cm]{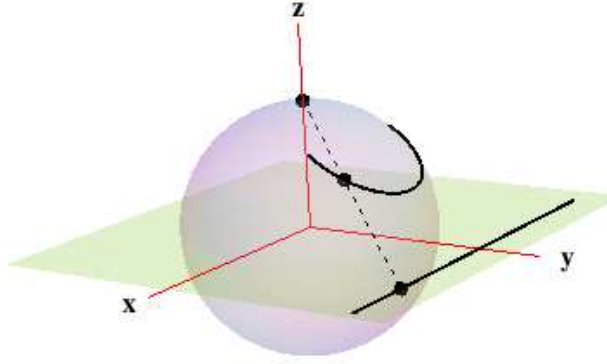}
\end{center}
\caption{\label{fig:chosing_projection}
For a suitable choice of the projection point and the coordinates, a small circle can be taken as the inverse image of a line parallel to the $x$-axis.
}
\end{figure}

\begin{theorem}\label{nonexistence_RRMF}
 Let $\mathbf{r}(t)$ be a rational helix with unit tangent
$\mathbf{t}(t)$ which is a rational vector field of degree $2$.
Then $\mathbf{r}(t)$ cannot have an RRMF.  
\end{theorem}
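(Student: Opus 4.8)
The plan is to reduce everything, via Remark~\ref{Remark_RRMF}, to a fully explicit parametrization of the tangent indicatrix, and then to obstruct the rationality of the RMF angle $\theta(t)$ by a residue computation. Since $\mathbf{t}(t)$ is rational of degree $2$, its image is a conic on $S^2$, i.e.\ a circle; as $\mathbf{r}(t)$ is a genuine (non-planar) helix this is a small circle — a planar curve is tacitly excluded here, its Frenet frame being already an RMF (indeed, for $\psi=\pi/2$ Proposition~\ref{RRMF_condition} is satisfied by constant $a,b$). After a rotation of space (which, by Remark~\ref{Remark_RRMF}, affects neither the degree of $\mathbf{t}(t)$ nor the existence of an RRMF) I may place a point of this circle at $(0,0,1)$, so that it becomes the inverse stereographic image of a line; parametrizing that line linearly and rotating about the $z$-axis, I may assume in \eqref{eq:Steographic_projection} that $b_1(t)=\alpha t+\beta$ with $\alpha\neq0$ and $b_2(t)\equiv c$. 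Here $\alpha\neq0$ because $\mathbf{t}(t)$ is non-constant, and $c\neq0$ precisely because the indicatrix is a small, not a great, circle.

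Next I record the data of this parametrization. A direct check shows every point $\mathbf{t}(t)$ satisfies $y+cz=c$, so the axis of the small circle is $\mathbf{u}=(0,1,c)/\sqrt{1+c^2}$ and $\cos\psi=\mathbf{t}(t)\cdot\mathbf{u}=c/\sqrt{1+c^2}$, hence $\cot\psi=c$. By conformality of the stereographic projection (the pulled-back spherical metric being $\tfrac{4}{(1+b_1^2+b_2^2)^2}(db_1^{2}+db_2^{2})$) one gets $|\mathbf{t}'(t)|=\dfrac{2|\alpha|}{(\alpha t+\beta)^2+c^2+1}$. From the identities in the proof of Proposition~\ref{RRMF_condition} the RMF angle obeys $\theta'(t)=-\cot\psi\,|\mathbf{t}'(t)|=-\dfrac{2c|\alpha|}{(\alpha t+\beta)^2+c^2+1}$, and an RRMF exists if and only if $\cos\theta(t)$ and $\sin\theta(t)$ are both rational functions of $t$.

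The crux is the integration step. Integrating the last display gives $\theta(t)=\mu\,\arctan(\lambda t+\rho)+C$ with $\lambda\neq0$ and, decisively, $\mu=\pm\dfrac{2c}{\sqrt{c^2+1}}$, so that $\tfrac{\mu}{2}=\pm\cos\psi\in(-1,1)\setminus\{0\}$. Using $e^{2i\arctan x}=\dfrac{1+ix}{1-ix}$ we obtain $e^{i\theta(t)}=e^{iC}\,\zeta(t)^{\mu/2}$, where $\zeta(t)=\dfrac{1+i(\lambda t+\rho)}{1-i(\lambda t+\rho)}$ is a M\"obius transformation with a simple zero and a simple pole at the two non-real conjugate points $t_{\pm}$ determined by $\lambda t_{\pm}+\rho=\pm i$. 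Hence $\dfrac{(e^{i\theta})'}{e^{i\theta}}=i\theta'(t)=\dfrac{\mu}{2}\Big(\dfrac{1}{t-t_+}-\dfrac{1}{t-t_-}\Big)$ has residue $\pm\tfrac{\mu}{2}=\pm\cos\psi$ at $t_{\pm}$. But if $e^{i\theta(t)}=\cos\theta(t)+i\sin\theta(t)$ were a rational function $R(t)$, then $R'(t)/R(t)$ would equal $\sum_j e_j/(t-r_j)$ with every $e_j\in\mathbb{Z}$ — the multiplicities of the zeros and poles of $R$ — so all its residues would be integers. Since $|\cos\psi|=|c|/\sqrt{c^2+1}\in(0,1)$ is not an integer, this is impossible; thus $\cos\theta,\sin\theta$ are not both rational and, by Proposition~\ref{RRMF_condition}, $\mathbf{r}(t)$ admits no RRMF.

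The one step I would write out with full care — and which I view as the only real obstacle — is the reduction in the first paragraph: one must verify that the degree-$2$ hypothesis genuinely permits the normalization ``$b_1$ linear, $b_2$ constant'' (so that the line exists and is parametrized linearly after a point of the indicatrix is sent to the pole), and, above all, that non-planarity of the helix is exactly what forces $c\neq0$, equivalently $\cos\psi\neq0$; without $c\neq0$ the angle $\theta$ would be constant and the obstruction would vanish. Everything after that is the short residue computation, resting on the single arithmetic fact that $|c|/\sqrt{c^2+1}$ lies strictly between $0$ and $1$.
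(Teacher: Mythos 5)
Your proof is correct, and it takes a genuinely different route from the paper's --- one that is in fact more complete. The paper performs essentially the same normalization ($b_1$ linear, $b_2$ constant, justified by Remark~\ref{Remark_RRMF}), substitutes into the criterion \eqref{eq:rrmf} to obtain \eqref{eq:RMFcon}, and then tests only \emph{linear} polynomials $a(t),b(t)$, concluding ``by inspection'' that the resulting system has no solution. Since Proposition~\ref{RRMF_condition} demands nonexistence for relatively prime $a,b$ of \emph{arbitrary} degree, and since $(a\,b'-a'\,b)/(a^2+b^2)$ can undergo cancellation, the paper's argument as written does not rule out higher-degree candidates. Your residue computation does: writing $e^{i\theta}=e^{iC}\,\zeta^{\mu/2}$ with $\zeta$ a M\"obius function and $\mu/2=\pm\cos\psi$, the logarithmic derivative $i\theta'$ has residues $\pm\cos\psi\in(-1,1)\setminus\{0\}$ at the conjugate points $t_\pm$, whereas the logarithmic derivative of a genuine rational function is a sum of simple poles with \emph{integer} residues; this kills candidates $a,b$ of every degree at once. (In the paper's notation the same obstruction reads: since $a^2+b^2=(b+ia)(b-ia)$ with the two factors coprime, the residues of $(a\,b'-a'\,b)/(a^2+b^2)$ at the roots of $a^2+b^2$ are $\pm k/(2i)$ with $k$ a positive integer, while the right-hand side of \eqref{eq:RMFcon} has residues $\pm\cos\psi/(2i)$ at its two poles, forcing $|\cos\psi|$ to be a positive integer --- impossible.) Two small caveats, both of which you yourself flag: the normalization of the degree-$2$ indicatrix may require a M\"obius reparametrization of $t$ in addition to the spatial rotation (a birational degree-$2$ parametrization of the circle need not hand you a polynomial $b_1$ directly), which is harmless because existence of an RRMF is invariant under such reparametrizations; and the case $c=0$ (great-circle indicatrix, planar curve, $\theta$ constant) must indeed be excluded, as you do explicitly and as the paper does implicitly, since there the $(\mathbf{w}_2,\mathbf{w}_3)$ frame is itself a rational RMF.
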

\begin{proof}
As we assume that  $\mathbf{t}(t)$ is a rational vector field of degree $2$,
as mentioned in Remark \ref{Remark_RRMF}, we can
choose $b_2(t)$  as a constant, say $n\in \mathbb{R}$, and  $b_1(t)=m \,t, \, m\in \mathbb{R}$ , in equation
\eqref{eq:Steographic_projection}.
In this case one obtains,
\begin{equation*}
\mathbf{u}=(1,0,n)/\sqrt{1+n^2},
\end{equation*}
since
\begin{equation*}
\mathbf{t}=(2n,2mt,-1+n^2+m^2t^2)/(1+n^2+m^2t^2).
\end{equation*}
Hence, we have, 
\begin{equation*}
\cot \psi=n. 
\end{equation*}
On the other hand, a simple computation gives that,
\begin{equation*}
|\mathbf{t}'|=2m/(1+n^2+m^2). 
\end{equation*}
Therefore \eqref{eq:rrmf} becomes
\begin{equation}\label{eq:RMFcon}
\frac{a(t) \,b'(t)-a' (t)\,b(t)}{a^2(t)+b^2(t)}=\frac{m\,n}{1+n^2(t)+m^2(t)}.
\end{equation}
Subsequently, the left hand side of \eqref{eq:RMFcon} for linear polynomials $a(t)= \, n_2\,t+n_3, \, b(t)=n_4\,t+n_5$ is
\begin{equation}\label{eq:right}
\frac{n_4 n_3- n_2 n_5}{(n_3 + n_2\, t)^2 + (n_5 + n_4 \,t)^2}.
\end{equation}
where $n_1, \,n_2,\,n_3,\,n_4, \,n_5\in \mathbb{R}.$ From the equality of \eqref{eq:RMFcon} and \eqref{eq:right}, we can get
the following equations
\begin{align*}
n_3 n_4 - n_2 n_5 - \lambda n_2 m=&0,\\
n_1^2 - n_2^2 - n_4^2 =&0,\\
  n_3^2 + n_5^2 - \lambda ( n^2 +1)=& 0,\\
 n_2 n_3 +  n_4 n_5 =&0,
\end{align*}
where $\lambda \in \mathbb{R}$.
An inspection shows that there do not exist $n_2, n_3, n_4, n_5,\lambda$ which satisfy
this set of equations.
Therefore, we demonstrated that there do not exist polynomials $a(t)$ and $b(t)$.
\end{proof}

There is not any RRMF on rational helices, for the simplest case where 
$\mathbf{t}(t)$ is a rational vector field of degree $2$.  Consequently, this motivates to develop a rational approximation to RMFs.

\section{Rational approximation on rational helices}
\label{sec:rational_approximation_RMF}

In this section, we 
will make a minimax rational approximation on rational helices by using
 \textsc{Mathematica}. A $(m,k)$ degree  rational function is the ratio of a degree 
$m$ polynomial to a degree $k$ polynomial. 
The \emph{error} of minimax rational approximation is the difference between the function 
and its approximation w.r.t. Euclidean norm.  The aim of minimax rational approximation is to 
minimize the maximum of the relative error from the polynomial curve. 

Let $h(t)$ be continuous on a closed interval $[t_0,t_1]$. Then there exists a unique $(m,k)$ 
degree rational polynomial $\frac{a(t)}{b(t)}$, called the \emph{minimax rational approximation} to exact function $h(t)$, that minimizes
\begin{equation*}
\varepsilon(a(t),b(t))=\max_{t_0<t<t_1} \mid h(t)-\tfrac{a(t)}{b(t)}\mid.
\end{equation*}

Nonexistence of RRMFs on rational helix curves
with tangent indicatrix of degree $2$ motivates an approximation of RRMFs which
 can be done as in \cite{FaroukiHan03} for PH polynomials, but in our case using the formula in Proposition
 \ref{RRMF_condition} which is special for rational helices of any degree.
Although, $\tan\frac{\theta(t)}{2}$ may not be rational, one can make
a rational approximation by 
\begin{equation}\label{tantetah}
\tan\frac{\theta(t)}{2}=-\tan \left(\int\,\frac{\tau\,\sigma}{2}  \,dt\right)\simeq\frac{a(t)}{b(t)},
\end{equation}
for some relatively prime polynomials
$a(t)$ and $b(t)$.
Utilizing this gives a rational frame 
\begin{equation}\label{eq:frame_approximation}
\tilde{\mathbf{f}}_1(t)=\mathbf{t}, \quad
 \begin{pmatrix}
  \tilde{\mathbf{f}}_2(t)  \\ \tilde{\mathbf{f}}_3(t) 
  \end{pmatrix}
  =
- \frac{1}{a^2(t)+b^2(t)}
  \begin{pmatrix}
    a^2(t)-b^2(t) & -2\,a(t)\,b(t) \\2\,a(t)\,b(t) &  a^2(t)-b^2(t)
  \end{pmatrix}
 \begin{pmatrix}
\mathbf{w}_2(t)\\  \mathbf{w}_3(t)
  \end{pmatrix},
\end{equation}
which is a rational approximation to the RMF. This approximation is done in the following example by using the 
minimax approximation procedure as explained before. 

\begin{example}\label{ex:RRRM_approximation}
In this example $\mathbf{t}(t)$ is chosen by \eqref{eq:Steographic_projection}
where $b_1(t)=t$ and  $b_2(t)= t-1 $. Then $a_3(t)$ is chosen as a rational Bezier curve of degree $3$ given by
\eqref{eq:rational_Bezier} with
\begin{equation*}
 c_0 = 1, \, c_1 = 2, \, c_2 = 0, \, c_3 = 0, \, w_1 = \frac{1}{2}, \, w_2 = 1.
\end{equation*}
Hence, we obtain a rational helix of degree $9$. 
Then we compute 
\begin{equation*}
\tan \frac{\theta(t)}{2}=\frac{1}{2 - 2 \,t + 2 \,t^2}.
\end{equation*}
A minimax rational approximation of degree $(3,3)$ to that function
is obtained to be
\begin{equation*}
\tan \frac{\theta(t)}{2}\simeq \frac{0.188141 + 0.445412\, t - 0.0170917\, t^2 + 0.15731 \,t^3}{1 - 
 0.549016 \,t + 0.513789 \,t^2 - 0.011689 \,t^3}
 \end{equation*}
with error $3.63871 \times 10^{-6}$.
With $a(t)$ and $b(t)$ at hand one can compute a rational
approximation to the RMF by \eqref{eq:frame_approximation}, which is depicted in Fig.
\ref{fig:rotations}. The frame is not a RMF but it is close to
satisfy RMF condition, the RMF condition error is given in Fig. \ref{fig:RRMF_error}. 
\begin{figure}
\begin{center}
\includegraphics[angle=0,width=6cm]{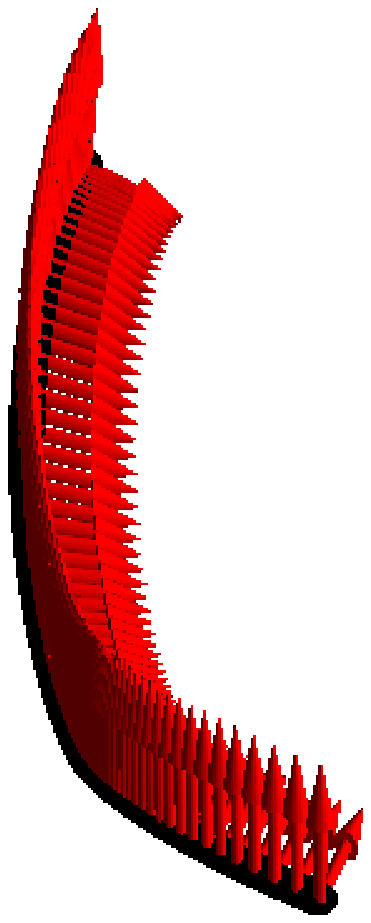}
\includegraphics[angle=0,width=6cm]{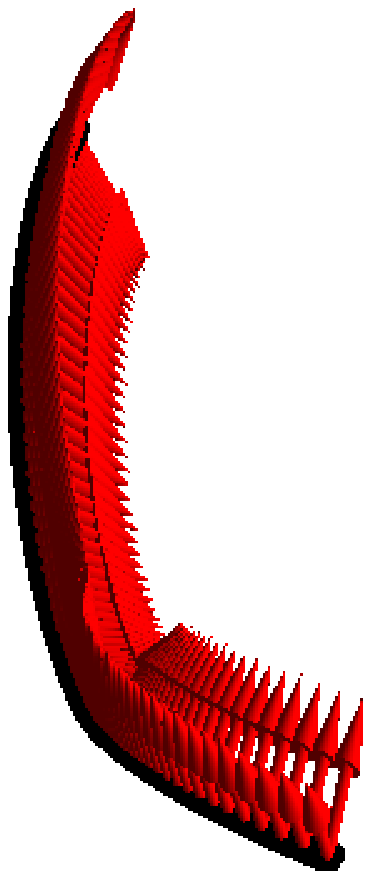}
\end{center}
\caption{\label{fig:rotations} Comparison of rational approximation to (left) RMF and  (right) FSF on a rational helix (for clarity, the tangent vector is omitted).
}
\end{figure}

\begin{figure}
\begin{center}
\includegraphics[angle=0,width=6cm]{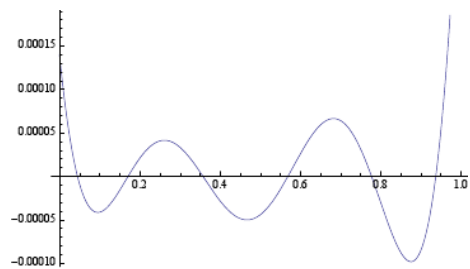}
\end{center}
\caption{\label{fig:RRMF_error} Error for the RMF condition \eqref{eq:rmfcon}.
}
\end{figure}
\end{example}

\section{Rational approximation of profile surfaces}
\label{sec:surface}
A \emph{profile surface} is a sweep surface generated
by an RMF. More explicitly,
it has a parametric representation
\begin{equation*}
\textbf{S}(s,t)=\,\mathbf{r}(t)+\mathbf{f}_2(t)\,c_1(s)+\mathbf{f}_3(t)\, c_2(s),
\end{equation*}
where $\mathbf{r}(t)$ is the
\emph{spine curve} with parameter $t\in[t_0,t_1]\in\mathbb{R},$
$c(s)=(c_1(s),c_2(s))^T$ is the  \emph{cross section} or \emph{profile curve}
with parameter $s\in[s_0,s_1]\subset \mathbb{R}$, and
$(\mathbf{f}_1(t),\mathbf{f}_2(t),\mathbf{f}_3(t))$ is an RMF
 along $\mathbf{r}(t)$.

If the cross section curve is a straight line, then the profile surface is a developable surface \cite{Maurer99}. This implies that they are flat surfaces, i.e.
they have vanishing Gauss curvature $K=0.$
In the next section, we obtain a rational approximation
of an RMF on rational helices and we derive profile surfaces with this rational helical curve.

Rational approximation of RMF can be used to
generate rational approximations to profile surfaces. If the profile curve $c(s)$
is chosen to be a straight line then the rational approximation to
the profile surface is expected to 
have Gauss curvature close to zero values.
\noindent
\begin{example}
Consider two sweep surfaces, 
\begin{equation*}
\begin{split}
 \textbf{S}_1(s,t)=\,&\mathbf{r}(t)+(-\tfrac{1}{5} \,s + 5)\, \mathbf{\tilde{f}}_2(t) + (10\, s - \tfrac{1}{2})\, \mathbf{\tilde{f}}_3(t),\\[0.6ex]
 \textbf{S}_2(s,t)=\,&\mathbf{r}(t)+(-\tfrac{1}{5} \,s + 5)\, \frac{1}{\sin\psi} \mathbf{u}\times\mathbf{v}(t) + (10 \,s -  \tfrac{1}{2})\, \mathbf{w}_3(t),
\end{split}
\end{equation*}
generated by the rational approximation to the RMF (left) and by the FSF (right)
of the rational helical given in Example \ref{ex:RRRM_approximation}, see Figure \ref{fig:sweep_surfaces}.

The Gaussian curvature $\tilde{K}$ can be used as an accuracy criterion. 
Since the cross section curve 
\begin{equation*}
c(s)=\,(-\tfrac{1}{5} \,s + 5,10\, s - \tfrac{1}{2})^T
\end{equation*}
in this example is a straight line,
the Gauss curvature of a profile surface is vanishing.  For  profile surface $\textbf{S}_1(s,t),$
 minimum and maximum  values of the Gauss curvature are
\begin{equation*}
\begin{split}
\tilde{K}_{min}( 0.899997, 1)=&-6.87389\times10^{-20}, \\[0.6ex]
\tilde{K}_{max}(0.899991, 4.82357)=&-6.02541\times10^{-13}.
\end{split}
\end{equation*} 
Our approximation $\tilde{K}$ is between 
the values $\tilde{K}_{min}$ and $\tilde{K}_{max}$ which are close to zero.
Therefore  this criterion shows us that our approximation gives hight precision results.
\end{example}

\begin{figure}
\begin{center}
\includegraphics[angle=0,width=7cm]{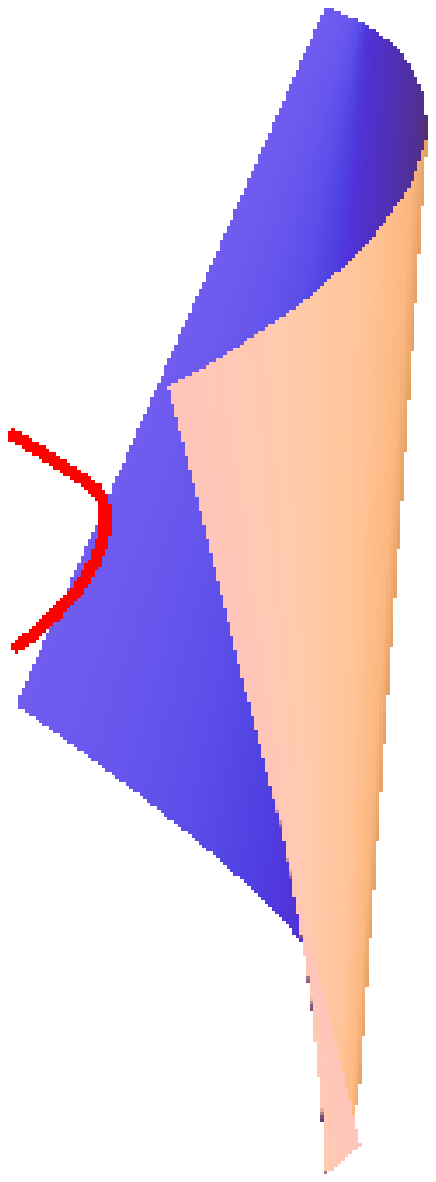}
\includegraphics[angle=0,width=7cm]{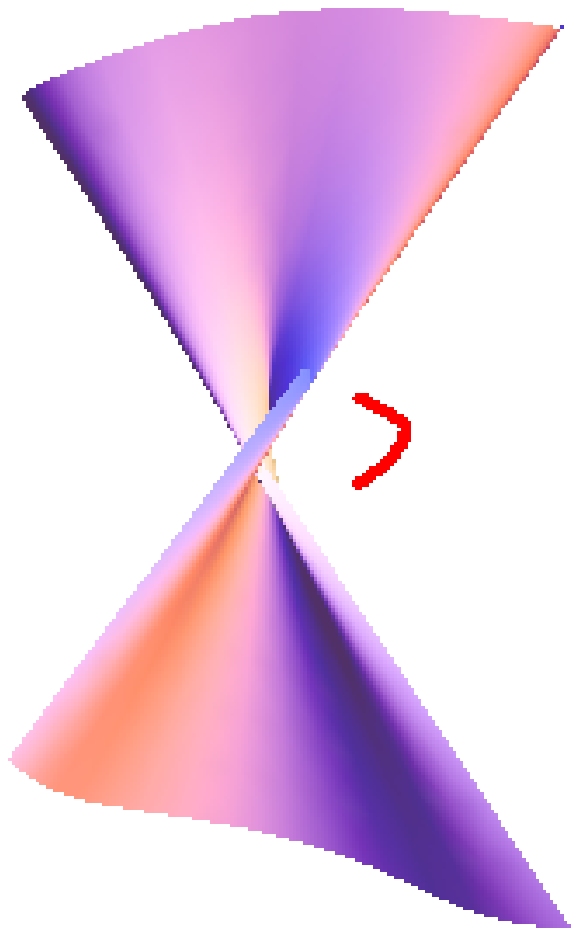}
\end{center}
\caption{\label{fig:sweep_surfaces} Profile surfaces $S_1(s,t)$ and $S_2(s,t),$ generated by rational approximation to the RMF (left) and by the FSF (right).
}
\end{figure}

\section{Concluding remarks}
\label{concl}
Rational representation of PH curves and rational approximation to RMFs are two main topics for 
computer graphics, swept surface
or generalized cylinder constructions, motion design and control in computer animation and streamline visualization in CAD/CAM.  The importance reasons are: RMFs have property of minimum twist which makes them useful and rational expressions simplify the calculations. In the paper, we have discussed the geometry of rational helices and we
applied it to RRMFs. 

Although polynomial helices are well-documented, rational helices
have not investigated enough. We hope our study sheds light on
future work. There arises many related future problems:
\begin{itemize}
\item Removing the singularities (cusp points) of rational helices. 
\item When rational helices are polynomial curves?
\item Rigid body design is another application which deserves more attention.
\end{itemize}


\section*{Acknowledgement} The author was supported by the Dutch National Science Foundation (NWO) under Grant  435053, the project name is Certified Geometric Approximation (CGA). 
We would like to thank Prof.Dr. Arthur E. P. Veldman for helpful discussions.


\bibliographystyle{unsrt}
\bibliography{bib}

\end{document}